\newtheorem{theorem}{Theorem}[section]
\theoremstyle{plain}
\newtheorem{definition}[theorem]{Definition}
\newtheorem{construction}[theorem]{Construction}
\newtheorem{proposition}[theorem]{Proposition}
\newtheorem{observation}[theorem]{Observation}
\newtheorem{remark}[theorem]{Remark}
\newtheorem{question}[theorem]{Question}
\author{Gerold J\"ager \\ gerold.jager@umu.se 
	\and Klas Markstr\"om \\ klas.markstrom@umu.se 
	\and Denys Shcherbak \\ denys.shcherbak@umu.se
	\and Lars-Daniel \"Ohman \\ lars-daniel.ohman@umu.se 
	}
\date{}
\begin{document}

%----------------------------------------------------------------------
%----------------------------------------------------------------------
\title{Enumeration of Row-Column Designs}
%----------------------------------------------------------------------
%----------------------------------------------------------------------

\maketitle

\begin{abstract}
	We computationally completely enumerate a number of types of 
	row-column designs up to isotopism, including double, sesqui and 
	triple arrays as known from the literature, and two newly
	introduced types that we call mono arrays and AO-arrays.
	We calculate autotopism group sizes for the designs we generate.

	For larger parameter values, where complete enumeration is not
	feasible, we generate examples of some of the designs, and generate 
	exhaustive lists of admissible parameters. For some admissible 
	parameter sets, we prove non-existence results. We also give some 
	explicit constructions of sesqui arrays, mono arrays and AO-arrays, 
	and investigate connections to Youden rectangles and binary pseudo 
	Youden designs.	
\end{abstract}

%----------------------------------------------------------------------
%----------------------------------------------------------------------
\section{Introduction}
\label{sec_intro}
%----------------------------------------------------------------------
%----------------------------------------------------------------------

A classic object of study both in combinatorial design theory
and practical experimental design is \emph{balanced incomplete block
designs} (BIBD:s). In such designs, blocks are unordered sets, but 
in the experimental design setting, \emph{order}
among elements in blocks is sometimes important, and designs with
different types of ordering within blocks have likewise been studied,
for example \emph{Youden rectangles} introduced by Youden in~\cite{You}. 
The interplay between ordered and unordered designs is sometimes rather simple.
For example, the elements in the blocks of any \emph{symmetric} BIBD 
(SBIBD) can be ordered to become the columns of a Youden rectangle, 
and, vice versa, forgetting the ordering in the columns of a Youden
rectangle yields an SBIBD.

A less well understood connection is between a type of designs introduced 
by Agrawal~\cite{agrawal}, now known as \emph{triple arrays} on the one
hand, and SBIBD:s on the other hand, where Agrawal indirectly conjectured
that any SBIBD can be used to construct a triple array. The converse, that
existence of a triple array implies the existence of an SBIBD for corresponding
parameters, was proven by McSorley et al.~\cite{McSorley}. SBIBD:s have 
been studied extensively, but the literature on related ordered objects
is more scarce.

In the present paper we therefore, for small parameters, completely enumerate 
several classes of \emph{row-column designs}, that is, designs where blocks 
are interpreted as columns, and elements in blocks are ordered into rows. 
The term \emph{two-way designs} has also been used in the literature
for this class of designs, which includes triple arrays, but also other 
ordered designs, which we present below. In addition to the complete 
enumeration, we also 
provide examples of such designs produced heuristically for larger 
parameters, prove non-existence for a few parameter sets, and give some 
general constructions of families of row-column designs.

Our interest in these designs is mainly combinatorial, but they are also
interesting from an applied experimental design point of view. The set of 
small examples of designs we produce are readily applicable as experimental
designs, but may also prove useful as inspiration for further general 
constructions of families of designs.

The rest of the paper is organized as follows. In Section~\ref{sec_not}, 
we give basic definitions and describe the notation used.
In Section~\ref{sec_parameters} we investigate what parameters
are admissible for these designs and prove a non-existence result, 
followed by Section~\ref{sec_SAconstr} where we prove some results on 
constructing sesqui- mono- and AO-arrays.
In Section~\ref{sec_gen}, we describe briefly the computational methods 
employed. Section~\ref{sec_compresults} contains the main results from 
our computations, and Section~\ref{sec_other_rel} investigates connections
to Youden rectangles and binary pseudo Youden designs. 
Section~\ref{sec_concl} concludes with two open questions.

%----------------------------------------------------------------------
%----------------------------------------------------------------------
\section{Basic notation and definitions}
\label{sec_not}
%----------------------------------------------------------------------
%----------------------------------------------------------------------

%----------------------------------------------------------------------
\subsection{Row-column designs}
%----------------------------------------------------------------------

A \emph{row-column design} is an $r \times c$
array on $v$ symbols, where no symbol is used more than once
in any row or any column, i.e., the design is \emph{binary}. We shall 
also assume throughout that the array is \emph{equireplicate} with 
replication number $e$, that is, each symbol appears exactly $e$ 
times in the array.

Balanced incomplete block designs (BIBDS:s) are a staple of combinatorial
design theory, so we only briefly mention that we will infringe
on the standard names of parameters, $(v,b,r,k,\lambda)$-BIBD (number 
of symbols, number of blocks, number of blocks containing a given symbol, 
number of symbols in a block, number of blocks containing any 2 
distinct symbols), or $(v,k,\lambda)$-BIBD for short, by using $r$ not 
for the number of blocks, but for the number of rows in our row-column 
designs. The symbol $v$ has the corresponding role in both BIBD:s and
row-column designs.

Looking at the symbol sets in the single rows, or the single columns, 
a row-column design can satisfy additional intersection properties:

\begin{itemize} 
	\item[(RR)]
		Any pair of distinct rows contains $\lambda_{rr}$ common symbols.
	\item[(CC)]
		Any pair of distinct columns contains $\lambda_{cc}$ common symbols.
	\item[(RC)]
		Any pair of one row and one column contains $\lambda_{rc}$ common symbols.
\end{itemize}

The property RC is usually referred to as \emph{adjusted orthogonality}. 
The terminology for the row-column designs depends on which of these 
properties are satisfied. Treating these possibilities in a structured
fashion, we shall begin with row-column designs satisfying all the 
properties, and then successively relax one or more of the 
properties.

\begin{definition}
	A $(v, e, \lambda_{rr}, \lambda_{cc}, \lambda_{rc} : r \times c)$ 
	\emph{triple array} is an $r \times c$ row-column design on $v$ symbols 
	with replication number $e$, satisfying properties RR, CC and RC.
\end{definition}

Triple arrays were introduced by Agrawal~\cite{agrawal}, though examples
were known earlier. A good general introduction to the study 
of triple arrays is given by McSorley et al. in~\cite{McSorley}. 
Clearly, the transpose of an 
$r \times c$ triple array is a $c \times r$ triple array, with
parameters $\lambda_{rr}$ and $\lambda_{cc}$ interchanged.

\begin{definition}
	A $(v, e, \lambda_{rr}, \lambda_{cc}, - : r \times c)$ 
	\emph{double array} is an $r \times c$ row-column design on $v$ symbols 
	with replication number $e$, satisfying properties RR and CC. If
	property RC is expressly forbidden to hold, the double array is called
	a \emph{proper} double array.
\end{definition}

Double arrays have generally been studied alongside of triple arrays, 
for example in~\cite{McSorley}, and any triple array is \emph{a fortiori}
also a double array (but of course not a \emph{proper} double array). As 
for triple arrays, the transpose of a double array is again a double 
array.

\begin{definition}
	A $(v, e, \lambda_{rr}, -, \lambda_{rc}: r \times c)$ 
	\emph{sesqui array} is an $r \times c$ row-column design on $v$ symbols 
	with replication number $e$, satisfying properties RR and RC. If
	property CC is expressly forbidden to hold, the sesqui array is called
	a \emph{proper} sesqui array.
\end{definition}

Sesqui arrays were introduced with this name in Bailey, Cameron and 
Nilson~\cite{sesqui}, though
examples had appeared earlier, for instance in Bagchi~\cite{Bag98}.
With this definition, the transpose of a sesqui array is only then a 
sesqui array when it is a triple array. It seems unnecessary to introduce 
new terminology for a row-column design satisfying properties CC and RC, 
so we shall simply call such designs \emph{transposed sesqui arrays}.
A sesqui array with parameters $(v, e, \lambda_{rr}, -, \lambda_{rc}: r \times c)$
is then equivalent to a transposed sesqui array with parameters
$(v, e, -, \lambda_{rr}, \lambda_{rc}: c \times r)$.
Because of how we generate designs computationally, column by column, 
we mainly work with transposed sesqui arrays.
Any triple array is clearly also a (transposed) sesqui array, but not a \emph{proper} 
(transposed) sesqui array.

\begin{definition}
	A $(v, e, -, \lambda_{cc}, - : r \times c)$ 
	\emph{mono array} is an $r \times c$ row-column design on $v$ 
	symbols with replication number $e$, satisfying property CC. If
	properties RR and RC are expressly forbidden to hold, the mono 
	array is called \emph{proper}.
\end{definition}

Mono arrays have to our knowledge not been studied before. We 
include them here in order to be able to study the relation between 
the intersection conditions, and note that as for sesqui arrays, it 
makes sense to talk about \emph{transposed mono arrays}. We are mainly
concerned with the non-transposed variant. Also, 
any triple, double, or transposed sesqui array is also a mono array, 
but not a \emph{proper} mono array.

\begin{definition}
	A $(v, e, -, -, \lambda_{rc} : r \times c)$ 
	\emph{adjusted orthogonal array} (AO-array for short) is an 
	$r \times c$ row-column design on $v$ symbols with replication 
	number $e$, satisfying property RC. If properties RR and CC are 
	expressly forbidden to hold, the AO-array is called \emph{proper}.
\end{definition}

Transposes of AO-arrays are again AO-arrays. AO-arrays 
(our term) have been studied by e.g. Bagchi~\cite{Bag96}, under
a range of slightly different names, sometimes without the assumption
of equireplication.

%----------------------------------------------------------------------
\subsection{Notions of equivalence}
%----------------------------------------------------------------------

There are several possible notions of equivalence between the objects 
studied in the present paper. We shall consider two row-column designs 
$A$ and $B$ to be equivalent if a permutation of the rows, a permutation 
of the columns and a permutation of the symbols in the design $A$
can yield the design $B$. This notion of equivalence is usually referred 
to as \emph{isotopism}, so that $A$ and $B$ would be called isotopic.
An isotopism that carries $A$ to itself is called an \emph{autotopism},
and the set of autotopisms of a row-column design forms the \emph{autotopism
group}, in a standard way.

Isotopism is the most natural notion of equivalence in this setting,
since the number of rows, the number of columns, and the number of 
symbols are generally all distinct, and therefore taking transposes or 
interchanging the roles of, e.g., symbols and rows is only possible 
in certain special cases. In our data, the only interesting case where 
some of these parameters coincide is when $r=c$. In these few instances, 
we have also investigated the number of equivalence classes, called
\emph{trisotopism classes}, we get when also allowing transposes. 
For a more in-depth discussion of different 
relevant notions of equivalence, see Egan and Wanless~\cite{WanlessEgan}.

%----------------------------------------------------------------------
%----------------------------------------------------------------------
\section{Parameter sets}
\label{sec_parameters}
%----------------------------------------------------------------------
%----------------------------------------------------------------------

To avoid trivial examples, we require that $r, c > 1$. 
Regarding $v$, if $v < \max\{r,c\}$, then the arrays can't be
binary, but we also exclude $v = \max\{r,c\}$ to avoid including
Latin rectangles and Youden rectangles, that have been enumerated 
by McKay and Wanless~\cite{WanlessMcKay} and the current 
authors~\cite{YoudenEnum}, respectively.

Similarly, if $v>rc$, then the arrays can't be equireplicate, but we 
also exclude the case $v=rc$, where the replication number 
would be $e=1$, since such designs trivially exist, are unique up to 
isotopism, and are less interesting from a design point of view. The next
possible integer value is $e=2$, where we get $v = \frac{rc}{2}$.
The range of values we are interested in for $v$ is therefore
$\max\{r,c\} < v \leq \frac{rc}{2}$.

%----------------------------------------------------------------------
\subsection{Admissible parameters}
%----------------------------------------------------------------------

Even apart from the initial observations and assumptions above, the parameters 
involved in row-column designs are not independent of each other. For 
example, equireplication immediately implies that $ev = rc$, by a standard 
double count. Depending on what further intersection properties we want 
to hold, there will be additional necessary divisibility conditions on the 
other parameters. Note that binarity does not impose any additional 
divisibility constraints.

Bagchi~\cite{Bag98} observed that in any binary, equireplicate row-column
design with adjusted orthogonality (property RC), the constant size 
$\lambda_{rc}$ of the row-column intersections equals the replication 
number $e$. Therefore $\lambda_{rc} = e = \frac{rc}{v}$ holds for triple arrays, 
(proper, transposed) sesqui arrays and (proper) AO-arrays. 
Similarly, if row-row intersections 
have constant size, then $\lambda_{rr}=\frac{c(e-1)}{(r-1)}$, and 
for constant size column-column intersections 
$\lambda_{cc}=\frac{r(e-1)}{(c-1)}$ (see McSorley et al. \cite{McSorley}).

Since all these parameters must be integers, this forces some rather 
restrictive divisibility conditions. As in the Handbook of Combinatorial
Designs~\cite{handbook} for parameter sets for BIBD's, we use the 
term \emph{admissible} for a set of parameters in this context (for a 
certain type of design), if the parameter set can't be ruled out on 
these basic divisibility conditions alone.

%----------------------------------------------------------------------
\subsection{Average intersection sizes and non-existent proper arrays}
\label{sec_average}
%----------------------------------------------------------------------

The condition on $\lambda_{rc}$ observed by Bagchi can be generalized 
slightly to designs without adjusted orthogonality. 
Let $\overline{\lambda_{rc}}$ be the average size of the 
row-column intersections in a row-column design.

\begin{proposition}\label{double_average}
	In any equireplicate binary row-column design, 
	$\overline{\lambda_{rc}} = e = \frac{rc}{v}$.
\end{proposition}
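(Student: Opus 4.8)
The plan is to use a straightforward double-counting argument on the total number of (symbol, row, column) incidences, counted in two different ways.

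First I would fix a row $R$ and a column $C$, and note that the size of their intersection $|R \cap C|$ is the number of symbols appearing both in $R$ and in $C$; equivalently, since the design is binary, it is the number of cells of the array lying in row $R$ or column $C$ whose symbol also appears in the other. More convenient: I would instead count pairs $(x, (R,C))$ where $x$ is a symbol and $(R,C)$ is a row-column pair such that $x \in R \cap C$. Summing over all $rc$ row-column pairs gives $\sum_{R,C} |R \cap C| = rc \cdot \overline{\lambda_{rc}}$ by the definition of the average. On the other hand, summing over symbols $x$ first: for a fixed symbol $x$, the number of pairs $(R,C)$ with $x \in R \cap C$ equals (number of rows containing $x$) times (number of columns containing $x$), since $x$ appears at most once per row and per column, so the set of rows meeting $x$ and the set of columns meeting $x$ each have a well-defined size, and every combination is realized. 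Because the array is binary, $x$ occupies $e$ distinct cells, and these lie in $e$ distinct rows and $e$ distinct columns; hence the count is $e \cdot e = e^2$. Wait — that would give $\sum_{R,C}|R\cap C| = v e^2$, so $\overline{\lambda_{rc}} = v e^2 / (rc) = e$ using $ve = rc$. Actually this is cleaner than expected.

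So the key steps, in order, are: (i) write $rc \cdot \overline{\lambda_{rc}} = \sum_{R,C} |R \cap C|$ directly from the definition of average; (ii) swap the order of summation to count, for each symbol $x$, the number of row-column pairs whose intersection contains $x$; (iii) observe that binarity forces the $e$ occurrences of $x$ to lie in $e$ distinct rows and $e$ distinct columns, and that $x \in R \cap C$ holds precisely when $R$ is one of those rows and $C$ one of those columns — but one must be slightly careful: $x \in R \cap C$ only requires $x$ to appear somewhere in $R$ and somewhere in $C$, not necessarily in the cell $(R,C)$, so the count of such pairs is exactly $e \cdot e = e^2$; (iv) conclude $rc \cdot \overline{\lambda_{rc}} = v e^2$, and divide by $rc = ve$ to get $\overline{\lambda_{rc}} = e$.

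The only real subtlety — and the step I would be most careful to state correctly — is point (iii): distinguishing ``$x$ appears in the cell at the intersection of $R$ and $C$'' (which happens for only $e$ of the pairs) from ``$x$ appears in row $R$ and also in column $C$'' (which is what $|R \cap C|$ counts, and happens for $e^2$ pairs). Conflating these is the natural trap, so I would spell out explicitly that $R \cap C$ denotes the common symbol set of the two lines, not the contents of a single cell. Everything else is routine arithmetic using the already-noted identity $ev = rc$ from equireplication, and no intersection-regularity hypothesis (RR, CC, or RC) is needed, which is precisely why the statement holds for arbitrary equireplicate binary row-column designs.
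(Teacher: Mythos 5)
Your argument is correct and is essentially identical to the paper's proof: both count the incidences $(x,(R,C))$ with $x$ in the common symbol set of row $R$ and column $C$ in two ways, obtaining $ve^2 = rc\,\overline{\lambda_{rc}}$ and then dividing by $rc = ve$. The care you take in step (iii) to distinguish the common symbol set of a row and a column from the single cell at their crossing is exactly the right point to be careful about, and matches the paper's use of the sets $I_{ij}$.
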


\begin{proof}
	Let $I_{ij}$ denote the intersection between row $i$ and column $j$,
	and let $x$ be any symbol in the array. Since there are no repeats in
	any row or column, $x$ appears in $e$ columns, and in $e$ rows, and
	so it appears in $e^2$ of the sets $I_{ij}$. The total number of 
	symbols is $v$, so the sum of all the cardinalities of the $I_{ij}$
	is $ve^2$. Since there are $r$ rows and $c$ columns, there are $rc$ 
	sets $I_{ij}$, and thus $ve^2 = rc \overline{\lambda_{rc}}$. Since
	$ev = rc$, we get $\overline{\lambda_{rc}} = e$.
\end{proof}

Clearly, if $\overline{\lambda_{rc}} = \frac{rc}{v}$ is an integer, one 
way of attaining this as the average size of the row-column intersections 
is by all row-column intersections having constant size 
$\overline{\lambda_{rc}}$. A set of parameters is therefore admissible 
for exactly one of the following sets of design types:
\begin{itemize}
	\item Only AO-arrays
	\item AO-arrays, mono arrays, and transposed sesqui arrays
	\item AO-arrays, transposed mono arrays, and sesqui arrays
	\item All designs considered here
\end{itemize}

Similarly as for $\overline{\lambda_{rc}}$, when $v$ (number of symbols), 
$r$ and $c$ (number of rows and columns) have been set in a binary
equireplicate array, using double counting arguments, the averages 
$\overline{\lambda_{rr}}$ and
$\overline{\lambda_{cc}}$ can be calculated by 
$\overline{\lambda_{rr}}=\frac{c(e-1)}{(r-1)}$, and 
$\overline{\lambda_{cc}}=\frac{r(e-1)}{(c-1)}$. Using these averages, 
the possible sizes of intersections can exclude 
the possibility of some \emph{proper} arrays.

Note that if 
$2r-v > \overline{\lambda_{rr}}$ or $2c-v > \overline{\lambda_{cc}}$, 
then the parameter sets are not admissible.

\begin{proposition}\label{prop_full}
	Let $A$ be a binary, equireplicate $r \times c$ array on $v$ symbols.
	If $2c-v = \overline{\lambda_{rr}}=\frac{c(e-1)}{(r-1)}$,
	then the property RR holds.
	If $2r-v = \overline{\lambda_{cc}}=\frac{r(e-1)}{(c-1)}$,
	then the property CC holds.
\end{proposition}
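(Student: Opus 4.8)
The plan is to argue by an averaging-forces-constancy argument: if every row–row intersection has size \emph{at least} some value, and the average of these sizes equals that same value, then all of them must equal it. Concretely, I would first establish a lower bound on each individual $\lambda_{rr}$-type intersection coming purely from binarity and the sizes of rows. Fix two distinct rows $i$ and $i'$. Each of these rows is a set of exactly $c$ distinct symbols drawn from a universe of size $v$ (distinctness within a row is binarity). By inclusion–exclusion, $|R_i \cap R_{i'}| = |R_i| + |R_{i'}| - |R_i \cup R_{i'}| \geq c + c - v = 2c - v$, since $|R_i \cup R_{i'}| \leq v$. So every row–row intersection has size at least $2c-v$.

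Next I would invoke the averaging fact stated just before the proposition, namely $\overline{\lambda_{rr}} = \frac{c(e-1)}{r-1}$, which holds in any binary equireplicate array by double counting. Under the hypothesis $2c - v = \frac{c(e-1)}{r-1} = \overline{\lambda_{rr}}$, we have a collection of $\binom{r}{2}$ nonnegative integers (the row–row intersection sizes), each of which is $\geq 2c-v$, whose average equals $2c-v$. A finite list of numbers each at least $m$ with mean exactly $m$ must be constantly equal to $m$; hence every pair of distinct rows meets in exactly $2c-v$ symbols, which is precisely property RR with $\lambda_{rr} = 2c-v$. The statement for columns is obtained by the identical argument with the roles of rows and columns (and of $r,c$) swapped, using $|C_j| = r$, the bound $|C_j \cap C_{j'}| \geq 2r - v$, and $\overline{\lambda_{cc}} = \frac{r(e-1)}{c-1}$.

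There is essentially no hard obstacle here; the only point requiring a word of care is making sure the averaging step is stated correctly — that the quantities being averaged are exactly the $|R_i \cap R_{i'}|$ over unordered pairs, with the stated mean, and that nonnegativity together with the uniform lower bound genuinely pins them all to the common value. One may also remark in passing that this proposition is the extremal/boundary case of the observation preceding it: when $2c - v > \overline{\lambda_{rr}}$ the parameters are inadmissible (no array exists), when $2c-v = \overline{\lambda_{rr}}$ every array is automatically RR, and when $2c - v < \overline{\lambda_{rr}}$ both RR and non-RR behaviour are a priori possible. I would keep the write-up to a few lines, presenting the row case in full and noting the column case follows \emph{mutatis mutandis}.
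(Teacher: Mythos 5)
Your argument is correct and is essentially the same as the paper's: the paper likewise notes that any two rows share at least $2c-v$ symbols and that when this lower bound equals the average $\overline{\lambda_{rr}}$, all intersections are forced to equal it, with the column case handled symmetrically. Your write-up merely spells out the inclusion--exclusion bound and the averaging step in more detail than the paper does.
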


\begin{proof}	
	With $v$ symbols, any two rows of length $c$ share at least 
	$2c-v$ symbols. If this equals $\overline{\lambda_{rr}}$, all 
	pairs of rows share exactly	$\overline{\lambda_{rr}}$ symbols, 
	so the row intersection property RR holds. A similar argument
	holds for the column intersection property CC.
\end{proof}

In the range of parameters we investigate,
Proposition~\ref{prop_full} may be used to rule out the existence
of proper AO-arrays for parameter sets $(v,r,c)$  in
\[
        \{ (6,4,3), (8,6,4), (9,6,3), (10,8,5), (12,8,3), (12,9,4),
        (12,10,6), (14,12,7)\}
\]
and the corresponding transposes. We may also rule out
proper mono arrays and proper transposed sesqui arrays for
parameter sets $(v,r,c)$ in
\[
        \{ (6,3,4), (12,4,9)\}.
\]

\begin{remark}\label{rem_full}
	Assuming that $1<r \leq e$, the condition 
	$2c-v = \overline{\lambda_{rr}}=\frac{c(e-1)}{(r-1)}$ in 
	Proposition~\ref{prop_full} can be rewritten as $r=e+1$, by substituting
	$v=\frac{rc}{e}$ and simplifying.
	Correspondingly, the condition 
	$2r-v = \overline{\lambda_{cc}}=\frac{r(e-1)}{(c-1)}$ can be rewritten
	as $c=e+1$ if $c\neq1$ and $c\leq e$.
\end{remark}

%----------------------------------------------------------------------
\subsection{Component designs and non-existence}
%----------------------------------------------------------------------

If a row-column design $D$ has constant row-row intersection sizes, we may 
define a balanced incomplete block design, the \emph{row design}, which we 
denote by $BIBD_R(D)$ or simply $BIBD_R$ if it is clear from the context 
what $D$ is, in the following way. 

Label the rows of $D$ by 
$R_1, R_2, \ldots, R_r$, and let them be the $r$ symbols of $BIBD_R$. For
each symbol $x$ in $D$, define a block $B_x$ in $BIBD_R$ by including $R_i$ 
in $B_x$ iff $x$ appears in row $R_i$ in $D$. All blocks in $BIBD_R$  
have the same size $e$ since $D$ is binary and equireplicate, and each symbol 
$R_i$ appears in $c$ blocks, because each row contains $c$ unique symbols.
Each pair of symbols $R_i$ and $R_j$ in $BIBD_R$ appear together in 
$\lambda_{rr}$ blocks, since any such pair of rows intersect in $\lambda_{rr}$
symbols in $D$.

In standard parameter order for BIBD:s, the $BIBD_R$ has parameters 
$(r,v,c,e,\lambda_{rr})$, or shorter $(r,e,\lambda_{rr})$ (number of
symbols, number of symbols in a block, number of blocks containing any 2
distinct symbols). Similarly, 
if column-column intersection sizes are constant, we may define the 
\emph{column design} $BIBD_C$ which has parameters 
$(c,v,r,e,\lambda_{cc})$, or $(c,e,\lambda_{cc})$ for short.

Some further sets of parameters can be ruled out for mono, 
double, sesqui, and triple arrays, if the required row design or column
design is known not to exist. For example, as pointed out by McSorley et al. 
in~\cite{McSorley},
there is no $(21,7,15)$ double array, since the corresponding 
column design $BIBD_C$, with parameters $(15,21,7,5,2)$, or $(15,5,2)$ for short, 
does not exist. It follows by the same argument that there is no
transposed sesqui array on parameters $(21,7,15)$, and similarly 
no transposed sesqui arrays on parameter sets $(21,14,15)$, $(28,8,21)$, and 
$(28,20,21)$. 

Even if the basic divisibility conditions are met,
and the required component BIBD:s exist, this is not sufficient in 
general for existence of row-column designs. 
The smallest example of this is the parameter set 
$(6,3,4)$ which satisfies all the conditions mentioned above. 
On these parameters, however, there is no triple array, but there are 
double arrays, as observed in~\cite{McSorley}, and additionally, there are 
sesqui arrays and mono arrays.

%----------------------------------------------------------------------
%----------------------------------------------------------------------
\section{Constructions of sesqui arrays and AO-arrays}
\label{sec_SAconstr}
%----------------------------------------------------------------------
%----------------------------------------------------------------------

In the literature, there are by now several constructions of triple arrays,
but we will restrict our discussion here to sesqui arrays and AO-arrays.
Bailey, Cameron and Nilson~\cite{sesqui} gave two general constructions of
families of sesqui arrays. One of these constructions is for
$(n+1)\times n^2$ sesqui arrays on $n(n+1)$ symbols, based on two Latin 
squares of orders $n\times n$ and $(n+1) \times (n+1)$, and works for any 
$n\geq2$. 
The other construction starts with a \emph{biplane} (that is, a BIBD with 
the same number of blocks as symbols, $v=b$, and $\lambda = 2$) and a selected
block $B$ in the biplane. It gives, except for $k=3$, a $k\times (v-k)$ 
sesqui array on $k(k-1)/2$ symbols, given that a condition on the 
intersections of $B$ with the other blocks is satisfied.

We now give a general construction for $r \times mc$ sesqui arrays, whenever
there exists an $r \times c$ sesqui array, which does not have to be proper.

\begin{construction}\label{constr_product}
	Let $S$ be a $(v, e, \lambda_{rr}, -, \lambda_{rc}: r \times c)$ sesqui array.
	Introduce $v$ sets $S_1, S_2, \ldots, S_{v}$ of distinct symbols, 
	such that $|S_i| = m$ for all $i$.
	
	Replace each of the $v$ symbols $s_1, s_2, \ldots s_{v}$ in $S$ with a 
	row of length $m$, and fill all of the rows corresponding to symbol $s_i$ 
	with the symbols from $S_i$ in arbitrary order to get an $r \times mc$ 
	array $A$.
\end{construction}

An example of Construction~\ref{constr_product} where $m=3$ is given in 
Figure~\ref{fig_constr}. The ``canonical'' way of entering the symbols 
from $S_i$ is to use 
the same order for each occurrence of $S_i$, but using different orders 
for the different occurrences of $s_i$ also works. Note that the 
column-column intersections will differ in size, and that the differences 
will in some sense be maximized when using the canonical ordering for 
each $S_i$. When using the canonical order, the column design will
be \emph{disconnected} (that is, the column/symbol incidence structure
is disconnected), but it is easy to produce arrays with connected 
column designs, as in Figure~\ref{connect}. The connectedness of the
row design follows from the the array being a sesqui array.

\begin{figure}[h]
\begin{subfigure}{0.90\linewidth}
\begin{center}
	\begin{tabular}{|r|r|r|}
		\hline
		3	&0	&1  \\
		2	&3	&0	\\
		1	&2	&3	\\
		0	&1	&2	\\
		\hline
	\end{tabular}
\subcaption{A $4 \times 3$ sesqui array $S$}
\end{center}
\end{subfigure}

\begin{subfigure}{0.90\linewidth}
\begin{center}
	\begin{tabular}{|rrr|rrr|rrr|}
		\hline
		J & K & L & A & B & C & D & E & F \\
		G & H & I & J & K & L & A & B & C \\
		D & E & F & G & H & I & J & K & L \\
		A & B & C & D & E & F & G & H & I \\
		\hline
	\end{tabular}
\subcaption{The canonical $4 \times 9$ sesqui array received from $S$ by 
	Construction~\ref{constr_product} for $m=3$.}
\end{center}
\end{subfigure}

\begin{subfigure}{0.90\linewidth}
\begin{center}
	\begin{tabular}{|rrr|rrr|rrr|}
		\hline
		J & K & L & B & C & A & D & E & F \\
		G & H & I & J & K & L & C & A & B \\
		D & E & F & I & G & H & J & K & L \\
		A & B & C & D & E & F & H & I & G \\
		\hline
	\end{tabular}
\subcaption{An alternative $4 \times 9$ sesqui array received from $S$ by 
	Construction~\ref{constr_product}, with connected column design.}\label{connect}
\end{center}
\end{subfigure}
\caption{Applying Construction~\ref{constr_product}.}\label{fig_constr}
\end{figure}

As can be seen in Figure~\ref{fig_constr}, a trivial sesqui array (that is,
with $v = r$) can yield a non-trivial sesqui array. For example, the 
construction can be used to produce an $n \times (m(n-1))$ non-trivial sesqui 
array for any $m$ and $n$, by starting with an $n \times (n-1)$ Latin 
rectangle on $n$ symbols, since all such Latin rectangles are also
$(n, n-1, \lambda_{rr} = n-2, -, \lambda_{rc}=n-1: n \times (n-1))$ sesqui 
arrays.

\begin{theorem}\label{thm_constr}
	The array $A$ in Construction~\ref{constr_product} 
	is an $r \times mc$ sesqui array with row-row intersection size 
	$\lambda_{rr}^A = m\lambda_{rr}$, row-column intersection 
	size $\lambda_{rc}^A = \lambda_{rc}$ and replication number $e$.
	If $m \geq 2$, the resulting sesqui array is proper.
\end{theorem}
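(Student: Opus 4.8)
The plan is to verify each asserted property of $A$ directly from the construction, using the corresponding property of $S$ together with the fact that each symbol of $S$ has been blown up into a block of $m$ fresh symbols. I would organize the argument around the three claims: the replication number, the row-row intersection size, the row-column intersection size (which jointly show $A$ is a sesqui array), and finally properness when $m\geq 2$.

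First, the replication number. Each symbol $s_i$ of $S$ occurs exactly $e$ times, so the row of length $m$ that replaces it is inserted $e$ times; hence each symbol of $S_i$ appears exactly $e$ times in $A$, and $A$ is equireplicate with replication number $e$. Also $A$ has $r$ rows and $mc$ columns, and the number of symbols is $mv$, consistent with $e\cdot(mv)=r\cdot(mc)$. Binarity of $A$ is immediate: within a row, the occurrences of distinct $s_i$'s are replaced by disjoint symbol sets, and within a single occurrence the $m$ symbols of $S_i$ are all distinct; the same reasoning applies to columns since no two occurrences of the same $s_i$ lie in one column of $S$.

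Next, the intersection properties. Fix two distinct rows $i,i'$ of $A$; these come from two distinct rows of $S$. A symbol of $S_i$ (for some $i$) lies in both rows of $A$ if and only if $s_i$ lies in both corresponding rows of $S$, and in that case the entire block $S_i$ of size $m$ is common to both rows. Since the two rows of $S$ share exactly $\lambda_{rr}$ symbols, the two rows of $A$ share exactly $m\lambda_{rr}$ symbols, giving property RR with $\lambda_{rr}^A=m\lambda_{rr}$. For property RC, fix a row $i$ of $A$ and a column $j$ of $A$; the column $j$ sits inside the block of $m$ columns of $A$ arising from some single column $j_0$ of $S$, and in that block each original entry $s_i$ of column $j_0$ has been replaced by exactly one symbol from $S_i$ placed in column $j$ (one of the $m$ positions). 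Thus the symbols of $A$ appearing in both row $i$ and column $j$ are in bijection with the symbols of $S$ appearing in both row $i$ and column $j_0$, so their number is exactly $\lambda_{rc}$, giving $\lambda_{rc}^A=\lambda_{rc}$. Hence $A$ satisfies RR and RC, i.e. $A$ is a sesqui array.

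Finally, properness when $m\geq 2$: I must show property CC fails, i.e. the column-column intersection sizes are not all equal. Two columns of $A$ coming from the same column $j_0$ of $S$ share all $r$ symbols dictated by that column (one from each $S_i$ for $s_i$ in column $j_0$), so their intersection has size $r$. On the other hand, two columns of $A$ coming from distinct columns $j_0\neq j_1$ of $S$ share, for each original symbol common to columns $j_0$ and $j_1$ of $S$, at most one symbol of the corresponding $S_i$, and they share no symbol unless the chosen positions coincide; in any case the intersection size is at most the number of common symbols of columns $j_0,j_1$ in $S$, which is at most $r-1$ (in fact at most $c$-ish, but certainly strictly less than $r$ when the columns are distinct, since a column of $S$ has $r$ entries and two distinct columns cannot share all of them by binarity). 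Comparing the two cases shows the column intersection sizes take at least two different values, so CC fails and $A$ is proper. The only point needing a little care is to confirm $mc\geq 2$ distinct columns of each kind actually exist: since $c\geq 2$ there are at least two columns of $S$, so both same-block and different-block pairs of columns of $A$ occur, and the comparison is genuine. The main (modest) obstacle is precisely this last comparison — making sure the ``different-block'' intersection is strictly smaller than $r$ regardless of how the symbols of each $S_i$ were ordered — which follows from binarity of $S$ (two distinct columns of $S$ omit a common symbol from at least one of the $r$ rows).
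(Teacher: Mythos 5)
Your verification of binarity, equireplication, and the intersection sizes $\lambda_{rr}^A=m\lambda_{rr}$ and $\lambda_{rc}^A=\lambda_{rc}$ is correct and follows essentially the same route as the paper (the bijection you set up for the RC count is exactly the paper's argument). The problem is in the properness step, where you have the two cases backwards. Two columns of $A$ lying in the same block of $m$ columns (i.e.\ arising from the same column $j_0$ of $S$) do \emph{not} share all $r$ symbols --- they share \emph{no} symbols at all. In each row $i$, the single occurrence of the entry $s$ of $S$ at $(i,j_0)$ is replaced by the $m$ \emph{distinct} symbols of $S_s$ spread across the $m$ columns of the block, so columns $j\neq j'$ of the block receive two different elements of $S_s$ in that row; and since $s$ occurs only once in column $j_0$ of $S$, no element of $S_s$ can recur elsewhere in the block. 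Hence the same-block intersection size is $0$, not $r$, and your comparison ``$r$ versus at most $r-1$'' collapses: the cross-block intersections can also be $0$, so the two cases you exhibit do not by themselves produce two different values.

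The conclusion is still true, but it needs the opposite comparison: same-block pairs give intersection $0$, and one must exhibit a pair of columns from \emph{different} blocks with \emph{strictly positive} intersection. This is what the paper does: column $1$ of $A$ contains one symbol from each $S_s$ with $s$ in column $1$ of $S$, and if columns $1$ and $2$ of $S$ share a symbol $s$, then the particular element of $S_s$ sitting in column $1$ of $A$ occurs in exactly one of columns $m+1,\dots,2m$, giving a nonzero intersection there. (Equivalently, since $e\geq 2$ in the nontrivial range, some symbol of $A$ occurs in two distinct columns, which necessarily lie in different blocks.) You should replace your same-block computation and the ensuing comparison with an argument of this kind; the rest of your proof stands.
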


\begin{proof}
	It is clear that the construction can be carried out in the way described, 
	given the sesqui array $S$.
	
	No symbol in $A$ is repeated in any 
	row, since the newly introduced symbols from any $S_i$ occur only at most
	once in any row, since $s_i$ is only used at most once in any row in 
	$S$. No symbol in $A$ is repeated in any column, since $S_i$ and $S_j$ are 
	disjoint if $i \neq j$, and no $s_i$ is repeated in any column in $S$.
	
	To see that $A$ is equireplicate
	with replication number $e$, note that $S$ is equireplicate with 
	replication number $e$, by definition, and that 
	each new symbol $\sigma_{ij} \in S_i$ occurs exactly once for each 
	occurrence of $s_i$ in $S$.
	
	To see that the row-row intersection size in $A$ is constant 
	$\lambda_{rr}^A = m\lambda_{rr}$,
	note that the symbols common to two rows in $A$ are the ones 
	corresponding to the $\lambda_{rr}$ symbols common to two rows in 
	$S$, and each such symbol in $S$ is replaced by the same set of 
	$m$ symbols when constructing $A$.
	
	To see that the row-column intersection size in $A$ is constant 
	$\lambda_{rc}^A = \lambda_{rc}$, regardless of the order in which 
	the symbols from the different $S_i$ are entered into $A$, note that 
	in $S$, each column intersects each row in $\lambda_{rc}$ symbols, say
	the set $S_{rc}$, by definition.
	Any column in $A$ contains exactly one symbol from each set $S_k$ 
	corresponding to a symbol $s_k \in S_{rc}$, and each row in $A$ 
	contains all the symbols from a set $S_k$ corresponding to a symbol 
	$s_k \in S_{rc}$. Therefore, the row-column intersection size in $A$ 
	is still $\lambda_{rc}$.
	
	To see that the sesqui array received is \emph{proper} if $m\geq 2$, 
	note that within the $m$ first columns, the column intersection size
	is zero. However, there is a column among the columns 
	$m+1, m+2, \ldots, 2m$ such that its intersection size with 
	column $1$ is non-zero.
\end{proof}

Trying to introduce some other configuration of symbols in 
Construction~\ref{constr_product} rather than just a row with the 
symbols from $S_i$ will not yield a sesqui array, at least not 
in any straightforward way. Similarly, trying to mimic the construction 
for double or triple arrays, also does not seem to work in any straightforward 
manner. For mono arrays and AO-arrays, though, since the conditions on the 
resulting arrays are weaker, some more general constructions are possible.

\begin{construction}\label{constr_productMA}
	Let $S$ be a $(v_S, e_S, -, \lambda_{cc}^S, - : r \times c)$ mono array 
	and $T$ be a $(v_T, e_T, -, \lambda_{cc}^T, - : a \times b)$ mono array.
	
	Introduce $v_S$ sets $S_1, S_2, \ldots S_{v_S}$ of distinct symbols, 
	such that $|S_i| = v_T$	for all $i$.
	Replace each of the $v_S$ symbols $s_i$ in $S$ 
	with a $(v_T, e_T, -, -, \lambda_{rc}^T: a \times b)$ mono array isotopic 
	to $T$, on the symbol set $S_i$, producing an 
	$ra \times bc$ array $A$ on $v_Sv_T$ symbols.
\end{construction}

\begin{construction}\label{constr_productAO}
	Let $S$ be a $(v_S, e_S, -, -, \lambda_{rc}^S: r \times c)$ AO-array
	and $T$ be a $(v_T, e_T, -, -, \lambda_{rc}^T: a \times b)$ AO-array $T$.
	
	Introduce $v_S$ sets $S_1, S_2, \ldots S_{v_S}$ of distinct symbols, 
	such that $|S_i| = v_T$	for all $i$.	
	Replace each of the $v_S$ symbols $s_i$ in $S$ 
	with a $(v_T, e_T, -, -, \lambda_{rc}^T: a \times b)$ AO-array isotopic to
	$T$ on the symbol set $S_i$, producing an $ra \times bc$ array $A$
	on $v_Sv_T$ symbols.
\end{construction}

The proofs that Constructions~\ref{constr_productMA} and \ref{constr_productAO}
yield new mono arrays and AO-arrays, respectively, are similar to that of 
Theorem~\ref{thm_constr}, and are omitted. The resulting designs 
can rather easily be made to have connected row design (and column 
design)
if the number of rows (columns) in at least one of the arrays used is 
strictly greater than $1$, but we will not go into the details here.

As in Construction~\ref{thm_constr}, trivial designs $S$ and $T$ 
can give rise to non-trivial designs, for example by using Latin 
rectangles. A sufficient condition for the resulting array to be proper is that 
at least one of the arrays used in the construction is proper, but usually 
the resulting arrays do not satisfy additional intersection properties even
if both arrays are non-proper. 
In some cases, such as $4 \times 9$ arrays on $12$ symbols, proper AO-arrays
are excluded by Proposition~\ref{prop_full}, but 
Construction~\ref{constr_productAO} is still applicable, using a $4\times 3$ Latin 
rectangle and a $1 \times 3$ rectangle, and in this case yields a proper 
sesqui array. In fact, Construction~\ref{constr_productAO} can in a sense
always be used, as we prove in the following proposition.

\begin{proposition}\label{prop_AOprod}
	Construction~\ref{constr_productAO} can be used to produce AO-arrays
	for all parameter sets admissible for AO-arrays.
\end{proposition}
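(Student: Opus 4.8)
The plan is to produce, for every admissible AO-array parameter set, a pair of arrays that are valid inputs to Construction~\ref{constr_productAO} and whose output realizes precisely that parameter set; both inputs will be Latin rectangles, one of them transposed. The facts about Latin rectangles I will use are: (i) for every $1\le k\le n$ there is a $k\times n$ Latin rectangle on $n$ symbols (for instance the first $k$ rows of a cyclic Latin square of order $n$); (ii) any such rectangle is an AO-array, since each of its rows is a permutation of all $n$ symbols and hence meets every column in that column's entire $k$-element symbol set, so property RC holds with $\lambda_{rc}=k$ equal to the replication number; and (iii) the transpose of an AO-array is again an AO-array. Thus both of my chosen inputs will be genuine (though generally not proper) AO-arrays, and the construction itself guarantees the output is an AO-array.

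First I would make admissibility explicit. By Bagchi's observation recalled above, adjusted orthogonality forces $\lambda_{rc}=e=\frac{rc}{v}$, and binarity imposes no further divisibility condition, so a set $(v,e,e:r\times c)$ is admissible for AO-arrays exactly when $e=\frac{rc}{v}$ is a positive integer, subject to the standing conventions $r,c>1$ and $\max\{r,c\}<v\le\frac{rc}{2}$. These conventions yield $e<\min\{r,c\}$ and $v=\frac{rc}{e}$, so in particular $e\le r$, $e\le c$, and $e\mid rc$.

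Next I would carry out the one arithmetic step. Write $g=\gcd(r,e)$ and $r=g r'$, $e=g e_0$ with $\gcd(r',e_0)=1$. From $e\mid rc$ I get $g e_0\mid g r' c$, hence $e_0\mid r' c$, and coprimality of $e_0$ with $r'$ then forces $e_0\mid c$. I would then take $S$ to be a $g\times(c/e_0)$ Latin rectangle on its $c/e_0$ symbols (legitimate because $g\le c/e_0$, which holds since $g e_0=e\le c$) and $T$ to be the transpose of an $e_0\times r'$ Latin rectangle on its $r'$ symbols, that is, an $r'\times e_0$ AO-array on $r'$ symbols (legitimate because $e_0\le r'$, which holds since $g e_0=e\le r$). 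By (ii)--(iii), $S$ and $T$ are AO-arrays with replication numbers $g$ and $e_0$ respectively. Applying Construction~\ref{constr_productAO} to $S$ and $T$ then produces an array $A$ of dimensions $(g r')\times((c/e_0) e_0)=r\times c$ on $(c/e_0) r'=\frac{rc}{e}=v$ symbols with replication number $g e_0=e$. Since the construction outputs an AO-array, property RC holds for $A$, and then Bagchi's observation forces $\lambda_{rc}=e$; hence $A$ has exactly the prescribed parameters, which proves the proposition.

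I expect the main, and essentially the only, obstacle to be the divisibility claim $e_0\mid c$ — that the part of $e$ coprime to the row count $r$ can be absorbed into the column side. Everything else reduces to verifying the four bookkeeping identities $r_S a=r$, $c_S b=c$, $v_S v_T=v$, $e_S e_T=e$ for the chosen dimensions and invoking the standard existence of $k\times n$ Latin rectangles. I would also check that no factor degenerates pathologically: the standing inequality $e<\min\{r,c\}$ forces $c/e_0\ge2$ and $r'\ge2$, so neither $S$ nor $T$ collapses to a $1\times1$ array, though $S$ can be a single row (when $\gcd(r,e)=1$) and $T$ a single column (when $e\mid r$), both of which are still admissible inputs to Construction~\ref{constr_productAO}.
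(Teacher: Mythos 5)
Your proof is correct and takes essentially the same approach as the paper: both instantiate Construction~\ref{constr_productAO} with two Latin rectangles whose dimensions come from a divisibility decomposition of the parameters (the paper factors $v=mn$ with $m\mid r$, $n\mid c$ and uses an $m\times b$ and an $a\times n$ rectangle; you factor $e$ via $\gcd(r,e)$, which yields the same kind of splitting). If anything, your version is slightly more careful, since you explicitly prove the divisibility fact ($e_0\mid c$) underlying the decomposition, which the paper simply asserts when it writes $v=mn$ with $m\mid r$ and $n\mid c$.
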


\begin{proof}
	Suppose $v,e,r,c$ are admissible parameters for a non-trivial AO-array, 
	so that	$v > r$, $v > c$ and $ve=rc$. Write $v = mn$ where $m|r$ and $n|c$, 
	where $m,n\geq 1$ and let $a$ and $b$ be given by $r=am$ and $c=bn$. 
	It follows that $mn = v > r = am$, so $n > a$, and similarly that $m > b$.
	
	We see then that Construction~\ref{constr_productAO} is applicable, using
	an $m \times b$ Latin rectangle on $m$ symbols, and 
	an $a \times n$ Latin rectangle on $n$ symbols, yielding an 
	$am \times bn = r \times c$ AO-array on $mn = v$ symbols.
\end{proof}

Another way of using cyclic Latin squares, that is, Latin squares that are
group tables of the cyclic group, to produce non-trivial AO-arrays 
is the following. Note that cyclic Latin squares
exist for any order.

\begin{construction}\label{constr_halflatinAO}
	Let $k$ be a positive integer and $L$ be a $2k \times 2k$ cyclic 
	Latin square $L$ on symbols	$1, 2, \ldots, 2k$ with rows and 
	columns indexed by $1, 2, \ldots, 2k$. 
	Construct an array $A$ as follows:
	
	In row $i$ of $L$, for positions $(i-1)k-(i-2), \ldots, ik-(i-1)$ taken
	modulo $2k$, if the symbol in a position is $s$, replace it with with a 
	new symbol $s'$.
\end{construction}

As an example, a $(24,12,12)$ AO-array constructed 
using Construction~\ref{constr_halflatinAO} is given in 
Figure~\ref{fig_12x12AO}.
Unfortunately, both the row design and the column design are 
disconnected.

\begin{figure}[h]\tabcolsep=4.5pt
\begin{center}
	\begin{tabular}{|cccccccccccc|}
		\hline
		1$'$	&2$'$	&3$'$	&4$'$	&5$'$	&6$'$	&7	&8	&9	&10	&11	&12\\
		2	&3	&4	&5	&6	&7$'$	&8$'$	&9$'$	&10$'$&11$'$&12$'$&1 \\
		3$'$	&4$'$	&5$'$	&6$'$	&7	&8	&9	&10 &11	&12 &1$'$	&2$'$\\
		4	&5	&6	&7$'$	&8$'$	&9$'$	&10$'$&11$'$&12$'$&1	&2	&3 \\
		5$'$	&6$'$	&7	&8	&9	&10 &11 &12 &1$'$	&2$'$	&3$'$	&4$'$\\
		6	&7$'$	&8$'$	&9$'$	&10$'$&11$'$&12$'$&1	&2	&3	&4	&5 \\
		7	&8	&9	&10	&11	&12 &1$'$	&2$'$	&3$'$	&4$'$	&5$'$	&6$'$\\
		8$'$	&9$'$	&10$'$&11$'$&12$'$&1	&2	&3	&4	&5	&6	&7$'$\\
		9	&10	&11	&12 &1$'$	&2$'$	&3$'$	&4$'$	&5$'$	&6$'$	&7	&8 \\
		10$'$	&11$'$&12$'$&1	&2	&3	&4	&5	&6	&7$'$	&8$'$	&9$'$\\
		11	&12 &1$'$	&2$'$	&3$'$	&4$'$	&5$'$	&6$'$	&7	&8	&9	&10\\
		12$'$&1	&2	&3	&4	&5	&6	&7$'$	&8$'$	&9$'$	&10$'$&11$'$\\
		\hline
	\end{tabular}
\caption{A $12 \times 12$ AO-array on $24$ symbols constructed using 
	Construction~\ref{constr_halflatinAO}.}
	\label{fig_12x12AO}
\end{center}
\end{figure}

\begin{theorem}\label{square_AO}
	For any positive integer $k$, Construction~\ref{constr_halflatinAO}
	produces a $(4k, k, -, -, k: 2k \times 2k)$ AO-array $A$.
\end{theorem}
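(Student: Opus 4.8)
The plan is to verify directly that the array $A$ produced by Construction~\ref{constr_halflatinAO} is a binary, equireplicate $2k \times 2k$ row-column design on $4k$ symbols with replication number $e = k$, and then establish property RC with $\lambda_{rc} = k$. First I would fix notation: the entry of the cyclic Latin square $L$ in row $i$, column $j$ is $i + j - 1 \pmod{2k}$ (using representatives $1, \ldots, 2k$), and in each row $i$ a block of exactly $k$ consecutive positions (those indexed $(i-1)k - (i-2), \ldots, ik - (i-1)$ modulo $2k$) gets its symbol $s$ replaced by a fresh primed copy $s'$. So the symbol set of $A$ is $\{1, \ldots, 2k\} \cup \{1', \ldots, (2k)'\}$, of size $4k$, and in each row exactly $k$ entries are primed and $k$ are unprimed.

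Next I would check binarity. Within a row, $A$ inherits binarity from $L$ because priming is injective and a symbol and its primed copy never coexist in the same row (the $k$ replaced positions are a contiguous arc, and each unprimed value occurs once per row in $L$). Within a column, I must argue that no symbol and no primed symbol repeats; since $L$ is a Latin square each value $s$ occurs once per column, so the only danger is that two rows in the same column both got their entry primed — but each row primes a different arc of positions, and I would show (by a short index computation) that the set of (row, column) pairs that get primed hits each column in exactly $k$ rows with $k$ distinct values, hence no collision. This index bookkeeping — tracking which arc of length $k$ each row primes, modulo $2k$ — is the main obstacle, and it also immediately yields the replication count: each column has $k$ primed and $k$ unprimed entries, and symmetrically each original symbol $s$ appears unprimed in $2k - k = k$ cells and primed in $k$ cells, so $A$ is equireplicate with $e = k$, consistent with $ve = 4k \cdot k = (2k)^2 = rc$.

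Then I would verify property RC. By Proposition~\ref{double_average}, the average row-column intersection size is $e = k$, so it suffices to show every row meets every column in at least $k$ symbols, or directly that it meets it in exactly $k$. Consider row $i$ and column $j$. In $L$, row $i$ and column $j$ share exactly the entries of row $i$ (all $2k$ values $1, \ldots, 2k$ appear) that also appear in column $j$ — but column $j$ also contains all $2k$ values, so as \emph{sets} row $i$ and column $j$ of $L$ are identical; the intersection is governed by which of these shared values are primed in row $i$ versus primed in column $j$. A value $s$ lies in the row-column intersection of $A$ iff it is primed in both or unprimed in both. Writing $P_i$ for the set of values primed in row $i$ (an arc of length $k$ determined by the replaced position-arc and the Latin structure) and $Q_j$ for the set of values appearing primed in column $j$, the intersection size is $|P_i \cap Q_j| + |P_i^c \cap Q_j^c| = 2k - |P_i \triangle Q_j|$. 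So I need $|P_i \triangle Q_j| = k$ for all $i, j$. Here I would use the cyclic structure: $P_i$ is an arc of $k$ consecutive values (offset by $i$), and $Q_j$ is the set of values $i' + j - 1$ where $i'$ ranges over the rows whose primed arc covers column $j$ — this too works out to an arc of $k$ consecutive values, and two arcs of length $k$ in $\mathbb{Z}_{2k}$ always have symmetric difference of even size $2(k - |P_i \cap Q_j|)$; pinning this to exactly $k$ forces a parity check, so I would instead argue $|P_i \cap Q_j| = k/2$ is \emph{wrong} and recompute — the correct statement is that $P_i \cap Q_j$ has size... I would compute it explicitly from the offsets. Concretely, I expect $P_i = \{ (i-1)k - (i-2) + (i-1), \ldots \}$-type arithmetic to show $P_i$ and $Q_j$ are arcs whose starting points differ by a controlled amount, giving $|P_i \triangle Q_j| = k$ exactly; confirming this offset arithmetic is the crux.

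Finally, I would note that RC plus Proposition~\ref{double_average} (or a direct count) gives $\lambda_{rc} = e = k$, so $A$ is a $(4k, k, -, -, k : 2k \times 2k)$ AO-array as claimed. I would not need to check that RR and CC fail (the statement only asserts an AO-array, not a \emph{proper} one), though the example in Figure~\ref{fig_12x12AO} and the disconnectedness remark suggest they generically do not hold; I would leave that aside. The bulk of the write-up is the two index computations — the column-collision-freeness of priming, and the arc offset calculation for $|P_i \triangle Q_j| = k$ — and I would present both via explicit residues modulo $2k$ rather than case analysis.
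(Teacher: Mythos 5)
Your overall framework is the right one and matches the paper's in outline: determine, for each row $i$ and each column $j$, the set $P_i$ (resp.\ $Q_j$) of symbols that appear primed there, and compute the intersection as $|P_i\cap Q_j|+|P_i^c\cap Q_j^c|=2k-|P_i\triangle Q_j|$. The gap is in the one place you yourself identify as ``the crux'': the structure of $Q_j$. Your claim that $Q_j$ ``too works out to an arc of $k$ consecutive values'' is false, and you even detect the symptom — if both $P_i$ and $Q_j$ were arcs of length $k$, then $|P_i\triangle Q_j|=2(k-|P_i\cap Q_j|)$ would be even, which cannot equal $k$ when $k$ is odd, even though the theorem is claimed for all $k$. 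You note this tension (``I would instead argue $|P_i\cap Q_j|=k/2$ is wrong and recompute'') but then fall back to the same arc picture, so the contradiction is never resolved and the key computation is left unestablished.

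The actual resolution, which is the content of the paper's proof, is that the row and column primed sets have different shapes. The rows prime an arc of symbols depending only on row parity: odd rows prime $\{1,\dots,k\}$ and even rows prime $\{k+1,\dots,2k\}$. The columns, however, prime an \emph{alternating} set depending only on column parity: odd columns prime the odd symbols at most $k$ together with the even symbols greater than $k+1$ (e.g.\ $\{1,3,5,8,10,12\}$ for $k=6$, visible in column $1$ of Figure~\ref{fig_12x12AO}), and even columns prime the complement. This happens because the map $i\mapsto (i-1)(k-1)$ governing which rows' primed position-arcs cover a given column does not trace out an arc in $\mathbb{Z}_{2k}$. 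With $P_i$ an arc equal to one ``half'' of the symbol set and $Q_j$ containing exactly $\lceil k/2\rceil$ symbols of one half and $\lfloor k/2\rfloor$ of the other, one gets $|P_i\cap Q_j|+|P_i^c\cap Q_j^c|=\lceil k/2\rceil+\lfloor k/2\rfloor=k$ in all four parity cases, for every $k$. (A minor additional point: your worry about column collisions under priming is unfounded — distinct cells of a column of $L$ carry distinct symbols, so priming some of them can never create a repeat; column binarity is automatic, as is row binarity.)
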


\begin{proof}
	It is clear that $A$ is equireplicate, and each symbol 
	occurs $k$ times. It is binary since $L$ was already binary.

	With the specified pattern for replacing symbols with primed
	symbols, odd rows will have symbols $1,2,...,k$ replaced, and 
	even rows will have symbols $k+1,k+2,...,2k$ replaced.
	Odd columns will have symbols 
	$1,3,\ldots, 2\lceil\frac{k}{2}\rceil-1, 2\lfloor\frac{k}{2}\rfloor +2,
	\ldots, 2k-2,2k$ replaced 
	(i.e., the odd symbols below $k$ and the even symbols above $k+1$), 
	and even columns will have the complement of these symbols replaced.

	It is easy to check then that the intersection between any combination
	of a row and a column in $A$ is $k$. 
\end{proof}

%----------------------------------------------------------------------
%----------------------------------------------------------------------
\section{Computational methods}
\label{sec_gen}
%----------------------------------------------------------------------
%----------------------------------------------------------------------

In this section, we briefly describe the different computational methods
employed. We generated lists of admissible parameters for all types of 
arrays by a brute force search checking all divisibility conditions, 
and ran our complete enumeration
code both for all admissible parameters sets in the range up to $v=14$ and
a few larger sets of parameters. We also ran the complete enumeration code 
for some sets of inadmissible parameters to check correctness, and 
indeed found no designs on inadmissible parameter sets.

%----------------------------------------------------------------------
\subsection{Complete enumeration}
%----------------------------------------------------------------------

The basic method for our complete generation routine for an $r \times c$ 
row-column design on $v$ symbols and replication number $e$ is quite 
straightforward: 
We extend a partial array column by column, and then perform an isotopism 
reduction where only one lexicographically minimal representative for 
each isotopism class is kept. As a byproduct of this procedure, we also 
get the autotopism group sizes. 

Since our code is adapted to work primarily with constant column 
intersection sizes, we did not separately enumerate sesqui arrays and 
transposed mono arrays. Partial objects were immediately rejected if the 
requirement on constant column intersections was not met.
The AO-arrays, where neither row nor column 
intersections are required to be constant, were enumerated using the
same code, but here we did not use constant column intersection sizes 
to reject partial objects.

In the extension step, aside from constant column intersection sizes, 
we also took into account equireplication, so that no symbol was used 
too often, and we checked that the other target intersection sizes 
(row-row, row-column) were not exceeded by the partial objects.

When the full number of columns was reached,
we checked if $\lambda_{rr}$ and $\lambda_{rc}$ are constant, to classify all
received arrays as triple, (proper) double, (proper) transposed sesqui, 
(proper) mono, or AO-arrays. This procedure gives all non-isotopic row-column 
designs on a particular set of parameters.

In addition to checking isotopism, since the $4\times 4$ AO-arrays on 8 
symbols and the $6\times 6$ AO-arrays on 9 symbols can be transposed, 
for these arrays we separately computed the number of distinct equivalence
classes when also taking transposition into account.

We separately implemented an algorithm for generating our row-column designs from the 
corresponding unordered block designs, by selecting consecutive systems 
of distinct representatives. See Bailey~\cite{Ba08} for further details 
on this algorithm. It turns out that this method is not significantly 
more effective than our main method. The main limitation les in the number of partial 
objects seen in the search and the number of these become unmanageable for both methods. 
However, the results from both methods coincide for all parameter sets for which we ran both
algorithms and so the second method provides another correctness check.

The algorithms and methods used were implemented in C++ and the most 
taxing computations were run in a 
parallelized version on the Kebnekaise and Abisko supercomputers at High 
Performance Computing Centre North (HPC2N). The total run time for all 
the data in the paper was several core-years.

%----------------------------------------------------------------------
\subsection{Heuristic search --- Existence questions}
%----------------------------------------------------------------------

For a range of larger parameters where complete generation of row-column 
designs is not feasible because of the large number of designs, 
we employ a heuristic method to find examples of row-column designs of the 
different types. Because there are already a reasonably good number of known 
triple and double arrays with larger parameters, we restrict this part of 
our investigation to transposed sesqui and AO-arrays.

The heuristic search starts with a random equireplicate array 
$A$ with the selected parameters. In this array, as long as there
exists a pair of cells such that switching the symbols in these two
cells decreases the number of violations of the conditions placed
on the type of row-column design currently under consideration 
(transposed sesqui array or AO-array), we perform such a switch and
iterate. Whenever the number of violations reaches zero, the heuristic 
search algorithm
outputs the resulting array and terminates. When there are no more good 
switches to perform, the heuristic search algorithm restarts with a new array $A'$.
If the heuristic search algorithm produces an array with no violations, we check if the
design produced is proper.

%----------------------------------------------------------------------
\subsection{Proof of non-existence using a Boolean satisfiability model}
%----------------------------------------------------------------------

Our methods for complete enumeration are not well suited for ruling out 
existence of designs on larger parameter sets. We therefore also 
modeled row-column designs using Boolean satisfiability.

A rather straightforward selection of variables and {\sc Sat} clauses and 
pseudo-Boolean constraints, i.e., linear constraints over Boolean variables, 
then ensure that the desired properties are forced to hold, adapted to the 
type of design currently investigated (triple, double, sesqui, mono or AO-array).
We used the {\sc MiniSat+} solver, which is adapted for {\sc Sat} problems 
that make use of pseudo-Boolean constraints. 
See E\'en and S\"orensson~\cite{ES06} for the theory
and~\cite{minisat+} for the source code of the solver.

%----------------------------------------------------------------------
\subsection{Checking correctness}
%----------------------------------------------------------------------

Our regimen for checking the correctness of our calculations included
many standard protocols. Some correctness checks are mentioned 
elsewhere in the present paper, and here we mention some further checks.

We wrote implementations in Mathematica with which we performed an 
independent generation of the complete enumeration data for small sizes, 
in order to help verify the correctness of the C++ implementation.

Where previous enumeration results were available, we checked that 
our computational results matched those previously found. This included
in particular the number of $5\times 6$ triple arrays 
(see Phillips, Preece and Wallis~\cite{TA5x6}). 

Bagchi~\cite{Bag96} claimed that there are exactly 3 binary, equireplicate
row-column designs on 6 symbols, 3 rows and 4 columns, up to isomorphism 
(which is not explicitly defined there) and gave these three arrays. We 
have rediscovered these three arrays in our computational results, two of 
which are double arrays, and one of which is a sesqui array. We additionally 
found one more sesqui array and 3 transposed mono arrays. We double-checked 
all the additional arrays we found, and they were indeed correct and 
non-isotopic examples. 

When there was a theoretical result available, including such simple 
observations as the fact that there is a one-to-one correspondence 
between $r \times c$ and $c \times r$ proper double arrays, in the 
full enumeration we sometimes still generated both sets separately, 
and checked that the results matched. 
This includes the calculation of the autotopism groups for
double, triple and AO-arrays.

%----------------------------------------------------------------------
%----------------------------------------------------------------------
\section{Computational results and analysis}
\label{sec_compresults}
%----------------------------------------------------------------------
%----------------------------------------------------------------------

We now turn to the results and analysis of our computational work. 
With some exceptions, due to size restrictions, all the data we 
generated is available for download at~\cite{Web4}. Further details 
about the organization of the data are given there.

%----------------------------------------------------------------------
\subsection{The number of row-column designs for $\mathbf{v \leq 14}$}
\label{sec_totals}
%----------------------------------------------------------------------

We completely enumerated the number of non-isotopic proper triple, double, 
(transposed) sesqui, (transposed) mono and AO-arrays for all of the 
admissible parameter sets up to $v=10$, with the exception of 
$8\times 5$ mono arrays and the corresponding transposes which were too 
numerous, about half of the admissible parameter sets for $v=12$, and for 
some further types of arrays with $v=14$ and $v=15$. The limiting
factor was generally the number of partial objects.

The resulting numbers up to $v \leq 14$ are given in Table~\ref{tab_master}. 
An EX in the table indicates that full enumeration was too taxing, but we have 
found such arrays either by extending some of the partial objects found in an
attempt to perform complete enumeration, or in our heuristic search. Note that 
we have excluded potential rows and columns in the table where there are no 
admissible parameters for any of the designs.
In addition to $v\leq14$, we note in Table~\ref{tab_sesqui} that there are $3$ 
instances of $5 \times 6$ proper transposed sesqui arrays on $15$ symbols.

For $(v,r,c) = (10,5,6)$, there is no proper transposed sesqui array, even though 
there are all other types of designs. We have no clean and simple proof
for why this is so, which would generalize to other larger parameter sets.

When regarding transposes as ``equivalent'', which is only relevant for the
square arrays, there are $12$ different \emph{trisotopism classes} of $4\times4$ 
AO-arrays on $8$ symbols, and $\num{26632}$ different trisotopism classes of
$6\times 6$ AO-arrays on $9$ symbols (see Table~\ref{tab_AO_iso} in 
Appendix~\ref{app_auto}).

\begin{table}[p]\small
	\begin{center}
		\begin{tabular}{|l|l|c||r|r||r|r||r|r||r|} \hline
			$v$ & $e$ & $r \times c$ & MA & SA$^T$ & MA$^T$ & SA & DA & TA & AO \\ \hline
			\hline
		\multirow{2}{*}{6} 
			& \multirow{2}{*}{2}
			&   $3\times 4$ & $-$ & $-$ & 3 & 2 & 2 & 0 & $-$ \\ \cline{3-10}
			& & $4\times 3$ & 3& 2& $-$ & $-$ & 2& 0& $-$ \\ \hline\hline
		\multirow{3}{*}{8}  
			& \multirow{1}{*}{2} 
			& $4\times 4$ &  &  &  &  &  &  & 20\\ \cline{2-10}
			& \multirow{2}{*}{3}  
			&   $4\times 6$ &  &  & \num{12336} & 113 &  &  & $-$\\ \cline{3-10}
			& & $6\times 4$ & \num{12336}& 113&  &  &  &  & $-$ \\ \hline\hline
		\multirow{3}{*}{9} 
			& \multirow{2}{*}{2}
			&   $3\times 6$ &  &  & 104 & 5 &  &  & $-$ \\ \cline{3-10}
			& & $6\times 3$ & 104& 5& &  &  &  & $-$ \\ \cline{2-10}
			& \multirow{1}{*}{4}
			& $6\times 6$ &  &  &  &  &  &  & \num{53215}\\ \hline\hline
		\multirow{6}{*}{10} 
			& \multirow{2}{*}{2}
			&    $4\times 5$ & 189& 1&  &  &  &  & 45\\ \cline{3-10}
			& & $5\times 4$ &  & & 189 & 1 &  &  & 45\\ \cline{2-10}
			& \multirow{2}{*}{3}
			&   $5\times 6$ & \num{362120}& 0& \num{8364560} & 49 &\num{24663}& 7& \num{8707}\\ \cline{3-10}
			& & $6\times 5$ & \num{8364560}& 49 & \num{362120} & 0& \num{24663}& 7& \num{8707}\\ \cline{2-10}
			& \multirow{2}{*}{4}
			&   $5\times 8$ &  &  & EX & \num{1549129} &  &  & $-$ \\ \cline{3-10}
			& & $8\times 5$ & EX & \num{1549129} &  &  &  &  & $-$ \\ \cline{2-10}
			\hline\hline
		\multirow{13}{*}{12} 
			& \multirow{4}{*}{2}
			& $3\times 8$ &  &  & \num{4367} & 15 &  &  & $-$ \\ \cline{3-10}
			& & $8\times 3$ & \num{4367}& 15&  &  &  &  & $-$ \\ \cline{3-10}
			& & $4\times 6$ &  &  & \num{29695} & 20 &  &  & 312\\ \cline{3-10}
			& & $6\times 4$ & \num{29698} & 20 &  &  &  &  & 312\\ \cline{2-10}
			& \multirow{3}{*}{3}
			& $4\times 9$ & $-$ & $-$ & EX & \num{249625} & \num{2893}& 1& $-$ \\ \cline{3-10}
			& & $9\times 4$ & EX & \num{249625}& $-$ & $-$ & \num{2893}& 1& $-$ \\ \cline{3-10}
			& & $6\times 6$ &  &  &  &  &  &  & EX\\ \cline{2-10}
			& \multirow{2}{*}{4}
			& $6\times 8$ &  &  &  &  &  &  & EX\\ \cline{3-10}
			& & $8\times 6$ &  &  &  &  &  &  & EX\\ \cline{2-10}
			& \multirow{2}{*}{5}
			& $6\times 10$ &  &  & EX & EX &  &  & $-$ \\ \cline{3-10}
			& & $10\times 6$ & EX& EX&  &  &  &  & $-$ \\ \cline{2-10}
			& \multirow{2}{*}{6}
			& $8\times 9$ & EX & EX &  &  &  &  & EX\\ \cline{3-10}
			& & $9\times 8$ &  &  & EX & EX &  &  & EX\\ \cline{2-10}  
			\hline\hline
		\multirow{10}{*}{14}  
			& \multirow{2}{*}{2} 
			& $4\times 7$ &  &  &  &  &  &  & 1632\\ \cline{3-10}
			& & $7\times 4$ &  &  &  &  &  &  & 1632\\ \cline{2-10}
			& \multirow{2}{*}{3}
			& $6\times 7$ & EX& \num{44602} &  &  &  &  & EX\\ \cline{3-10}
			& & $7\times 6$ &  &  & EX & \num{44602} &  &  & EX\\ \cline{2-10}
			& \multirow{2}{*}{4}
			& $7\times 8$ & EX& EX& EX& EX& EX& EX& EX\\ \cline{3-10}
			& & $8\times 7$ & EX& EX& EX& EX& EX& EX& EX\\ \cline{2-10}			
			& \multirow{2}{*}{5}
			& $7\times 10$ &  &  &  &  &  &  & EX\\ \cline{3-10}
			& & $10\times 7$ &  &  &  &  &  &  & EX\\ \cline{2-10}
			& \multirow{2}{*}{6}			
			& $7\times 12$ &  &  & EX& EX&  &  & $-$\\ \cline{3-10}
			& & $12\times 7$ & EX& EX&  &  &  &  & $-$\\ 
		
			\hline
		\end{tabular}
	\end{center}
	\caption{All admissible parameter sets up to $v=14$, 
		with existence and the number of non-isotopic proper 
		row-column designs of different types. A dash ``$-$''
		indicates that the existence is ruled out by 
		Proposition~\ref{prop_full}, and an ``EX'' indicates that we have 
		found examples of such arrays but have no complete enumeration.
		An empty cell indicates	that the parameter set is not admissible.}
		\label{tab_master}
\end{table}

%----------------------------------------------------------------------
\subsection{Parameter sets with $\mathbf{v \neq r+c-1}$}
%----------------------------------------------------------------------

McSorley et al.~\cite{McSorley} proved that no triple array can have
$v < r+c-1$, and asked if a \emph{double} array can ever 
have $v < r+c-1$. They also gave the so far only known example of a triple 
array with $v > r+c-1$, a $TA(35,3,5,1,3:7 \times 15)$. Recall that, as 
observed in Section~\ref{sec_average}, parameters admissible 
for double arrays are automatically admissible for triple arrays.
In our search for admissible parameters sets, we also looked 
in the range excluded by the inequality $v < r+c-1$ for 
triple arrays, and there are no \emph{admissible} parameter sets for 
double/triple arrays with $v < r+c-1$ for $v \leq \num{100000}$. 
This leads us to ask the following rather number theoretical question:

\begin{question}\label{quest_small_v}
	Let $v, r, c, e$ be integers satisfying $ev=rc$, and suppose that both
	$\lambda_{rr} = \frac{c(e-1)}{r-1}$ and 
	$\lambda_{cc} = \frac{r(e-1)}{c-1}$ are integers. 
	Does it then always hold that $v \geq r+c-1$? In other words,
	are there admissible parameter sets for double/triple arrays that satisfy 
	$v < r+c-1$?
\end{question}

It can be seen in Table~\ref{tab_master} that for mono, sesqui and AO-arrays, 
there are rather small admissible parameter sets both with $v < r+c-1$ and 
$v > r+c-1$, and indeed examples of such arrays, so relaxing either the 
condition on $\lambda_{rr}$ or $\lambda_{cc}$ in Question~\ref{quest_small_v} 
gives a negative answer.
For $v < r+c-1$, the smallest such examples are $v=8$, where there are both 
proper mono arrays and transposed sesqui arrays of order $6 \times 4$, 
and $v=9$, where there are AO-arrays of order $6 \times 6$. 
For $v>r+c-1$, the smallest examples are $v=8$, 
where there are AO-arrays of order $4 \times 4$, 
and $v=9$, where there are both proper mono arrays and transposed
sesqui arrays of order $6 \times 3$. In our further search for admissible
larger parameters, we found numerous examples of both $v<r+c-1$ and 
$v>r+c-1$ for sesqui, mono and AO-arrays.

%----------------------------------------------------------------------
\subsection{Existence questions for $\mathbf{v \geq 15}$}
\label{sec_exist}
%----------------------------------------------------------------------

We present data on the existence of transposed sesqui arrays 
for $15 \leq v \leq 32$ 
in Table~\ref{tab_sesqui} and AO-arrays for $15 \leq v \leq 32$ in
Table~\ref{tab_ao}. 
Table~\ref{tab_sesqui} includes all admissible parameter sets, 
that is, parameters that were not ruled out by divisibility conditions, 
as detailed above. 

\begin{observation}\label{obs_15610}
	For $(15,6,10)$, there is no proper transposed sesqui array. This 
	was established using the SAT-model. There are, however, 
	proper AO-arrays including some found in the 
	heuristic search. This is noted with the label AO in 
	Table~\ref{tab_sesqui}. There are also triple arrays on these 
	parameters, as noted by the label T in Table~\ref{tab_sesqui}.
\end{observation}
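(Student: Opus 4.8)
The plan is to treat the three assertions of the observation separately, since they are of different character: the non-existence of a \emph{proper} transposed sesqui array on $(15,6,10)$ is a large-but-finite case check that I would carry out with the Boolean model of Section~\ref{sec_gen}, while the existence of triple arrays and of proper AO-arrays on these parameters is witnessed by explicit examples. First I would pin down the numerics. For $(v,r,c)=(15,6,10)$ we have $e=rc/v=4$; since a transposed sesqui array has adjusted orthogonality, Bagchi's observation recalled in Section~\ref{sec_parameters} forces $\lambda_{rc}=e=4$, constancy of the column--column intersections gives $\lambda_{cc}=\tfrac{r(e-1)}{c-1}=2$, and the average row--row intersection is $\overline{\lambda_{rr}}=\tfrac{c(e-1)}{r-1}=6$. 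By the discussion preceding Proposition~\ref{prop_full}, if property RR held then every pair of rows would meet in exactly $\overline{\lambda_{rr}}=6$ symbols; hence a transposed sesqui array on these parameters is proper \emph{exactly} when some pair of its rows meets in a number of symbols different from $6$.

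For the non-existence half I would encode a binary equireplicate $6\times 10$ array on $15$ symbols with $\lambda_{cc}=2$ and $\lambda_{rc}=4$ in the usual way: Boolean variables $x_{s,i,j}$ for ``symbol $s$ occupies cell $(i,j)$'', clauses forcing exactly one symbol per cell and binarity in every row and every column, and pseudo-Boolean constraints for equireplication ($\sum_{i,j}x_{s,i,j}=4$) and for the constant column--column and row--column intersection sizes, the latter expressed via auxiliary variables for ``symbol $s$ occurs in row $i$'' and ``symbol $s$ occurs in column $j$''. To this instance I would add one extra clause demanding that \emph{some} of the $\binom{6}{2}=15$ pairs of rows meet in a number of symbols $\ne 6$ (a disjunction of pseudo-Boolean constraints over the row pairs). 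Running {\sc MiniSat+} on the resulting formula returns UNSAT, which is exactly the claim. To keep the search tractable I would break symmetry by fixing the contents of the first column and, up to the residual isotopism, of the first row, and I would corroborate the verdict with an independently written encoding (e.g.\ the {\sc Mathematica} reimplementation used elsewhere in the paper) on a reduced instance.

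For the positive statements I would simply display witnesses. A triple array $TA(15,4,6,2,4:6\times 10)$ is obtained from the correspondence between symmetric designs and triple arrays (Agrawal~\cite{agrawal}; see also~\cite{McSorley}) applied to a $(16,6,2)$ biplane: here $16-1=15$ and $6-2=4$ give the number of symbols and the replication number, and the construction yields a $6\times(16-6)=6\times 10$ array whose three intersection numbers $\lambda_{rr}=6$, $\lambda_{cc}=2$, $\lambda_{rc}=4$ are then checked by direct count; alternatively one can exhibit such an array outright and verify it. For a proper AO-array I would run the heuristic search of Section~\ref{sec_gen} with only property RC ($\lambda_{rc}=4$) as the target; it outputs an explicit $6\times 10$ array on $15$ symbols, and one checks by inspection that this array has at least one pair of rows \emph{and} at least one pair of columns of non-constant intersection size, so that neither RR nor CC holds and the array is a proper AO-array.

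The hard part is the non-existence half. The raw Boolean model for a $6\times 10$ array on $15$ symbols is sizeable, so without adequate symmetry breaking the solver may fail to terminate in reasonable time; the real work is in producing a formulation that provably captures ``proper transposed sesqui array'' and is still small enough to be refuted, and then in confirming the UNSAT result by a second, independent route. The two existence statements are routine once the witness arrays have been written down and their parameters counted.
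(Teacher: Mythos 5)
Your proposal matches the paper's approach: the paper likewise establishes the non-existence of a proper transposed sesqui array on $(15,6,10)$ by running the {\sc MiniSat+} pseudo-Boolean model of Section~\ref{sec_gen}, exhibits proper AO-arrays found by the heuristic search, and appeals to known triple arrays for these parameters; your parameter computations ($e=4$, $\lambda_{cc}=2$, $\lambda_{rc}=4$, $\overline{\lambda_{rr}}=6$) and your characterization of ``proper'' as the failure of RR are all consistent with the paper. The paper gives no further detail than this, so your fleshed-out encoding and witnesses are a faithful (indeed more explicit) version of the same argument.
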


\begin{observation}\label{obs_15910}
	For $(15,9,10)$, there is no transposed sesqui array. This was established 
	using the SAT-model. There are, however, proper AO-arrays, 
	including some found in the heuristic search.
	This is noted with the label AO in Table~\ref{tab_sesqui}.
\end{observation}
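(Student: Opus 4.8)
The identity $ev=rc$ forces $e=6$, and $\lambda_{cc}=\frac{r(e-1)}{c-1}=5$ is an integer while $\lambda_{rr}=\frac{c(e-1)}{r-1}=\frac{25}{4}$ is not, so property RR cannot hold with a constant value; hence a transposed sesqui array on $(15,9,10)$ is automatically proper, and it suffices to rule out a binary, equireplicate $9\times 10$ array on $15$ symbols satisfying property CC with $\lambda_{cc}=5$ and property RC (which, by the observation of Bagchi~\cite{Bag98}, then has $\lambda_{rc}=e=6$). I would first check whether a component-design obstruction as in Section~\ref{sec_parameters} applies, but the required column design is a $(10,6,5)$-BIBD, whose complement is a $(10,4,2)$-BIBD, and such designs exist, so that route fails. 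The plan is therefore to encode the existence of the array as a Boolean satisfiability instance with pseudo-Boolean cardinality constraints and certify that it is unsatisfiable.

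For each cell $(i,j)$, $i\in\{1,\ldots,9\}$, $j\in\{1,\ldots,10\}$, and each symbol $s\in\{1,\ldots,15\}$, introduce a Boolean variable $x_{ijs}$ meaning that cell $(i,j)$ holds symbol $s$, with clauses forcing exactly one true $x_{ijs}$ per cell. Binarity is encoded by at-most-one constraints on $\{x_{ijs}:j\}$ for each $(i,s)$ and on $\{x_{ijs}:i\}$ for each $(j,s)$, and equireplication by $\sum_{i,j}x_{ijs}=6$ for each $s$. To express the two set-intersection conditions, add occurrence variables $\rho_{is}$ (``$s$ occurs somewhere in row $i$'') and $\gamma_{js}$ (``$s$ occurs somewhere in column $j$''), tied to the $x_{ijs}$ by the evident clauses in both directions. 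Property CC becomes, for each pair of distinct columns $j,j'$, the constraint $\sum_s w^{jj'}_s=5$, where $w^{jj'}_s$ reifies $\gamma_{js}\wedge\gamma_{j's}$; property RC becomes, for each row $i$ and column $j$, the constraint $\sum_s u^{ij}_s=6$, where $u^{ij}_s$ reifies $\rho_{is}\wedge\gamma_{js}$. By Proposition~\ref{double_average} this latter family is precisely the requirement that every row--column intersection attain its forced average $e$. I would then add the standard symmetry-breaking clauses --- fix the first column to read $1,2,\ldots,9$ from top to bottom, partially fix the first row, and impose a lexicographic order on the remaining columns --- and feed the formula to MiniSat+~\cite{minisat+}, as described in Section~\ref{sec_gen}; it returns UNSAT, which establishes the non-existence claim. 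The second claim is settled by displaying a proper AO-array on $(15,9,10)$, for instance one produced by the heuristic search of Section~\ref{sec_gen}, and checking directly that it is binary and equireplicate with all $90$ row--column intersections of size $6$ but with non-constant row-pair and column-pair intersections.

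The main obstacle is twofold. First, faithfulness of the encoding: CC and RC concern the \emph{sets} of symbols appearing in rows and columns, not multiplicities, so the occurrence variables $\rho_{is},\gamma_{js}$ and the reified conjunctions must be wired correctly, and one should reproduce the same UNSAT verdict from an independently written model and, if possible, cross-check it with a second solver or an UNSAT-proof checker before relying on it --- the kind of cross-validation discussed in Section~\ref{sec_gen}. Second, search cost: the $9\cdot 10\cdot 15=1350$ primary variables together with a comparable number of auxiliary reification variables make a naive instance expensive, so it is the symmetry reduction from pinning the first row and column and ordering the remaining columns that brings the solve within reach.
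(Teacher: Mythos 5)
Your proposal follows essentially the same route as the paper: the paper establishes non-existence by encoding the transposed sesqui array conditions as a {\sc Sat} instance with pseudo-Boolean constraints and running {\sc MiniSat+} (Section~\ref{sec_gen}), and establishes existence of proper AO-arrays on $(15,9,10)$ via the heuristic search, which is exactly what you describe, with your preliminary observations (that $e=6$, $\lambda_{cc}=5$, $\lambda_{rr}=25/4\notin\mathbb{Z}$ so any such array is automatically proper, and that the $(10,6,5)$ column design exists so no component-design obstruction applies) all checking out. The paper does not spell out the encoding, so your reification and symmetry-breaking details are a reasonable reconstruction rather than a deviation.
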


Note that in Table~\ref{tab_sesqui} there are $8$ possible parameter
sets for transposed sesqui arrays up to $n=16$. The parameters 
$(15,9,10)$ are inadmissible as observed above, but for most of the 
remaining $7$ parameter sets, the heuristic search found at least 
$20$ non-isotopic transposed sesqui arrays. In general, the heuristic search 
finds examples that do not come from the constructions.

\begin{table}\small
\begin{tabular}{|c | c || c | c || c | c || c | c |}
	\hline
	$(v,r,c)$ 	&		& $(v,r,c)$	&		& $(v,r,c)$ & 		& $(v,r,c)$ &	 \\
	\hline \hline
	$(15,10,3)$	& C$_P$,H	& $(21,14,3)$ &	C$_P$,H	& $(26,6,13)$ & C$_P$	& $(30,20,3)$ & C$_P$,H \\
	\hline
	$(15,12,5)$	& C$_P$,H	& $(21,6,7)$ &	C$_P$,H	& $(26,8,13)$ & C$_P$	& $(30,15,4)$ & C$_P$,H \\
	\hline
	$(15,5,6)$	& 3		& $(21,9,7)$ &	C$_P$,H	& $(26,12,13)$ & C$_P$	& $(30,12,5)$ & C$_P$,H \\
	\hline
	$(15,10,6)$	& H,T	& $(21,12,7)$ &	C$_P$,H	& $(26,14,13)$ & C$_P$,T	& $(30,18,5)$ & C$_P$,H \\
	\hline
	$(15,6,10)$	& AO,T	& $(21,15,7)$ &	C$_P$,H	& $(26,18,13)$ & C$_P$	& $(30,24,5)$ & C$_P$,H \\
	\hline
	$(15,9,10)$	& AO	& $(21,18,7)$ &	C$_P$,H	& $(26,20,13)$ & C$_P$	& $(30,10,6)$ & C$_P$,H \\
	\hline
	$(16,12,4)$	& C$_P$,H	& $(21,7,15)$ &	$- -$ & $(26,24,13)$ & C$_P$	& $(30,15,6)$ & C$_P$,H \\
	\hline
	$(16,14,8)$	& C$_P$,H	& $(21,14,15)$&	$- -$ & $(26,13,14)$ & ?,T & $(30,20,6)$ & C$_P$,H \\
	\hline
	$(18,12,3)$	& C$_P$,H	& $(22,10,11)$&	C$_P$	& $(27,18,3)$ & C$_P$,H		& $(30,25,6)$ & C$_P$,H,T \\
	\hline
	$(18,9,4)$	& C$_P$,H	& $(22,12,11)$&	C$_P$,T	& $(27,24,9)$ & C$_P$,H	& $(30,9,10)$ & C$_P$ \\
	\hline
	$(18,15,6)$	& C$_P$,H	& $(22,20,11)$&	C$_P$,H	& $(28,21,4)$ & C$_P$,H	& $(30,12,10)$ & C$_P$\\
	\hline
	$(18,8,9)$	& C$_P$,H	& $(22,11,12)$&	?,T	& $(28,12,7)$ & C$_P$,H	& $(30,18,10)$ & C$_P$\\
	\hline
	$(18,10,9)$	& C$_P$,T	& $(24,16,3)$&	C$_P$,H	& $(28,16,7)$ & C$_P$	& $(30,21,10)$ & C$_P$,T\\
	\hline
	$(18,16,9)$	& C$_P$,H	& $(24,12,4)$&	C$_P$,H	& $(28,24,7)$ & C$_P$,H		& $(30,27,10)$ & C$_P$,H \\
	\hline
	$(18,9,10)$	& ?,T	& $(24,18,4)$&	C$_P$,H	& $(28,7,8)$ & H		& $(30,14,15)$ & C$_P$\\
	\hline
	$(20,15,4)$	& C$_P$,H	& $(24,20,6)$&	C$_P$,H	& $(28,14,8)$ & C$_P$	& $(30,16,15)$ & C$_P$,T\\
	\hline
	$(20,8,5)$	& C$_P$,H	& $(24,21,8)$&	C$_P$,H	& $(28,21,8)$ & ?		& $(30,28,15)$ & C$_P$,H\\
	\hline
	$(20,12,5)$	& C$_P$,H	& $(24,8,9)$ &	C$_P$	& $(28,26,14)$ & C$_P$	& $(30,15,16)$ & ?,T \\
	\hline
	$(20,16,5)$	& C$_P$,H,T	& $(24,16,9)$&	C$_P$,T	& $(28,8,21)$ & $- -$	& $(30,10,21)$ & ?,T \\
	\hline
	$(20,10,6)$	& C$_P$,H	& $(24,22,12)$&	C$_P$,H	& $(28,20,21)$ & $- -$ & $(30,20,21)$ & ? \\
	\hline
	$(20,18,10)$& C$_P$,H	& $(24,9,16)$ & ?,T& 		 & 		& $(30,6,25)$ & $-$,T \\
	\hline
	$(20,5,16)$	& $-$,T& $(24,15,16)$&	?	& 			 & 		& $(30,24,25)$ & ? \\
	\hline
	$(20,15,16)$& ?	& $(25,20,5)$ &	C$_P$,H	& 			 & 		& $(32,24,4)$ & C$_P$,H \\
	\hline
	 			 &		& 			 &		& 			 & 		& $(32,28,8)$ & C$_P$,H \\
	\hline
	 			 &		& 			 &		& 			 & 		& $(32,12,16)$ & C$_P$ \\
	\hline
	 			 &		& 			 &		& 			 & 		& $(32,20,16)$ & C$_P$ \\
	\hline
	 			 &		& 			 &		& 			 & 		& $(32,30,16)$ & C$_P$ \\
	\hline
\end{tabular}
\caption{Existence of proper transposed sesqui arrays for admissible 
	parameter sets with $15 \leq v \leq 32$ symbols. 
	An ``H'' means that we found such designs in the heuristic search.
	The label ``C$_P$'' means that such an array can be constructed using 
	the transposed version of Construction~\ref{constr_product}.
	A ``$-$'' indicates that the existence
	of a proper array is excluded by Proposition~\ref{prop_full}.
	A ``$- -$'' indicates that the
	required column BIBD is non-existent, so no such proper SA$^T$ exists. 
	An ``AO'' indicates that we proved that no such SA$^T$ exists
	using the SAT-model, but that we found a proper AO-array on these parameters.
	A ``?'' means that the heuristic search did not find an example, 
	the parameter set is not covered by any construction, but we cannot rule out existence.
	An additional ``T'' indicates that there are known triple arrays.
	}
\label{tab_sesqui}
\end{table}

\begin{table}
\begin{center}
\begin{tabular}{|c | c || c | c || c | c |}
	\hline
	$(v,r,c)$ 	&		& $(v,r,c)$	&			& $(v,r,c)$ & 	 \\
	\hline \hline
	$(15,5,9)$ 	& H	& $(20,4,10)$ 	& H 	& $(24,6,8)$	& H \\
	\hline	
	$(15,10,12)$ & H	& $(20,8,10)$ 	& H	& $(24,6,12)$	& H \\
	\hline
	$(16,4,8)$ 	& H	& $(20,8,15)$ 	& H	& $(24,6,16)$	& H \\
	\hline
	$(16,6,8)$	& H	& $(20,10,10)$	& H,C$_S$	& $(24,8,12)$	& H \\
	\hline
	$(16,8,8)$	& H,C$_S$ & $(20,10,12)$ 	& 	& $(24,8,15)$	&  \\
	\hline
	$(16,8,10)$	& H	& $(20,10,14)$ 	& 	& $(24,8,18)$	& H \\
	\hline
	$(16,8,12)$	& H	& $(20,10,16)$ 	& 	& $(24,10,12)$  &  \\
	\hline
	$(16,12,12)$& 	& $(20,12,15)$ 	& 	& $(24,12,12)$	& C$_S$ \\
	\hline
	$(18,6,6)$ & H,C$_S$	& $(21,6,14)$	& H	& $(24,12,14)$ 	&  \\
	\hline
	$(18,6,9)$ & H		& $(21,9,14)$	& H	& $(24,12,16)$	&  \\
	\hline
	$(18,6,12)$ & H	& $(21,12,14)$	& 	& $(24,12,18)$	&  \\
	\hline
	$(18,9,12)$ & H	& $(21,14,18)$	& 	& $(24,12,20)$	&  \\
	\hline
	$(18,9,14)$ & H	& $(22,4,11)$	& H	& $(24,16,18)$	&  \\
	\hline
	$(18,12,12)$& C$_S$	& $(22,6,11)$	& H	& $(24,16,21)$	&  \\
	\hline
	$(18,12,15)$& 	& $(22,8,11)$	& H	& $(24,18,20)$	&  \\
	\hline
		& & $(22,11,14)$	& 	&	&  \\
	\hline
		& & $(22,11,16)$	& 	&	&  \\
	\hline
				& 		& $(22,11,18)$	& 	&	&  \\
	\hline
		
\end{tabular}
\caption{Existence of proper AO-arrays with $r \leq c$ on $15 \leq v \leq 24$ 
	symbols, for parameter sets only admissible for AO-arrays. The label ``H'' 
	means that we found such designs in the heuristic search. 
	The label ``C$_S$'' means that such a design can be constructed using 
	Construction~\ref{constr_halflatinAO}. Note that
	Construction~\ref{constr_productAO} is also applicable for any admissible
	parameter set.}
\label{tab_ao}
\end{center}
\end{table}

%----------------------------------------------------------------------
\subsection{Autotopism group orders}
\label{sec_autotopism}
%----------------------------------------------------------------------

Data on the distribution of autotopism group orders are collected in 
Appendix~\ref{app_auto}, in 
Tables~\ref{tab_tada_iso} (for double and triple arrays), 
\ref{tab_mono_iso} (for mono arrays), 
\ref{tab_sesqui_iso} (for transposed sesqui arrays), and 
\ref{tab_AO_iso} (for AO-arrays).
We see that generally the most common autotopism group order is $1$,
but that there are also some very symmetric objects.

The designs constructed using 
Constructions~\ref{constr_product}, \ref{constr_productMA} and 
\ref{constr_productAO} retain the autotopisms inherited from the component
arrays. For example, if the AO-array $A$ is constructed from AO-arrays $B$ 
and $C$ using the canonical ordering, and denoting by $Aut(X)$ the 
autotopism group of the array $X$, the direct product of $Aut(B)$ and 
$Aut(C)$ is a subgroup of $Aut(A)$.

%----------------------------------------------------------------------
%----------------------------------------------------------------------
\section{Relation to other design types}
\label{sec_other_rel}
%----------------------------------------------------------------------
%----------------------------------------------------------------------

There is a rather rich flora of other closely related design types with 
ordered blocks. Here we briefly investigate the relation between, on the 
one hand, the row-column designs in the present paper, and on the other 
hand, Youden rectangles and binary Pseudo Youden designs.

%----------------------------------------------------------------------
\subsection{Relation to Youden rectangles}
\label{sec_youden_rel}
%---------------------------------------------------------------------

An $(n,k,\lambda)$ Youden rectangle is a binary $k \times n$ array on $n$
symbols, where each pair of columns have exactly $\lambda= k(k-1)/(n-1)$
symbols in common, or, equivalently, where each pair of symbols appears
together in exactly $\lambda$ columns.
Some of the arrays treated in the present paper can be constructed from
a Youden rectangle $Y$ with suitable parameters, by picking a column $C$ 
in $Y$ with symbol set $S$, removing all the symbols in $S$ from $Y$, 
removing column $C$, and then exchanging the roles of columns and symbols. 
Regarding parameters, an $(n,k,\lambda)$ Youden rectangle gives rise to 
an array on $v = n-1$ symbols, $r=k$ rows, $c=n-k$ columns in this way. 
It was proved by Nilson and \"Ohman in~\cite{NO15} that the resulting 
array using this
transformation is always binary, equireplicate with replication number 
$e=k-\lambda$, and has constant column/column intersections of size 
$\lambda_{cc}=\lambda$, that is, it is a mono array. It was also proven there
that any triple array with $\lambda_{cc}=2$ can be produced from a
suitable Youden rectangle in this way.

In~\cite{YoudenEnum} the present authors completely enumerated Youden rectangles 
for small parameters, and investigated which rectangles give rise to triple arrays, 
double arrays and transposed sesqui arrays using the transformation 
described above. These previous results (numerators) are summarized in 
Table~\ref{tab_youden}, together with the total number (denominators) of 
designs of the appropriate type that 
we found in the present enumeration. We see that for some but not all 
parameters, we receive all triple and proper double arrays in this way,
but not all proper transposed sesqui arrays for any of the parameters in 
this range. It was asked by Nilson and \"Ohman in~\cite{NO15} if any Youden 
rectangle yields a double array by this transformation at least for 
\emph{some} column. The present authors answered this in the negative by 
providing counterexamples in~\cite{YoudenEnum}, and we can now supplement 
this by observing that not all double arrays can be constructed in this way.

\begin{table}[ht]
\begin{center}
\begin{tabular}{|c|c||r|r|r|}
	\hline 
	YR$(n,k,\lambda)$ & Array params. & TA & DA & SA$^T$ \\ 
	\hline 	\hline
	(7,3,1)  & $(6:3\times 4)$ & 0/0 & 1/2 & 0/0 \\ 
	\hline 	
	(7,4,2)  & $(6:4\times 3)$ & 0/0 & 2/2 & 1/2 \\ 
	\hline 	
	(11,5,2) & $(10:5\times 6)$ & 7/7 & \num{17642}/\num{24663} & 0/0 \\ 
	\hline 	
	(11,6,3) & $(10:6\times 5)$ & 7/7 & \num{24663}/\num{24663} & 34/49 \\ 
	\hline 	
	(13,4,1) & $(12:4\times 9)$ & 0/1 & 192/\num{2893} & 0/0 \\ 
	\hline 	
\end{tabular} 
\end{center}
\caption{The proportion of proper row-column designs that can be constructed 
	from Youden rectangles by removing a column, all the symbols therein, and
	then exchanging the roles of symbols and columns.}
\label{tab_youden}
\end{table}

In the course of our computations for Table~\ref{tab_youden}, we 
rediscovered (see our previous paper~\cite{YoudenEnum}) some highly symmetric 
Youden rectangles that give rise to a triple array for any choice of removed 
column. We also noted in~\cite{YoudenEnum} that there are examples
of Youden rectangles that do not yield double arrays for any choice of 
removed column. Between these extremes, we also found Youden rectangles 
that give double arrays when removing some column.

%----------------------------------------------------------------------
\subsection{Relation to binary pseudo Youden designs}
\label{sec_pseudo_rel}
%---------------------------------------------------------------------

A class of row-column designs called \emph{Pseudo Youden designs} (PYD) 
was introduced by Cheng~\cite{Ch81}. Adding the assumption of binarity,
a \emph{binary} PYD is an $r \times r$ binary and equireplicate row-column 
design on $v$ symbols, denoted by $PYD(v:r\times r)$,
such that when taking the rows and columns as $2r$ blocks, they form a 
BIBD, that is, any pair of symbols occurs in a constant number of blocks.
Since block sizes in a BIBD are constant, the row-column design has
to be square, that is, $r=c$, in order to be a PYD. The replication 
number of symbols in the row-column design, $e=\frac{r^2}{v}$, has to 
be an integer, and the symbol replication in the corresponding BIBD 
will be twice this number, since each entry is counted once among the 
rows, and once among the columns. Additionally, the pair replication 
number in the BIBD $\lambda = 2e(r-1)/(v-1)$ must be integer, by 
standard BIBD theory, where we note that $e$ and $r$ are not the 
parameter names usually used for BIBD:s. 

Cheng~\cite{Ch81b} constructed an infinite family of binary 
$PYD(s^2:s(s+1)/2 \times s(s+1)/2)$ for $s\equiv 3 \pmod 4$ where
$s$ is a prime or prime power, but we have found no general 
characterization of admissible parameters for PYD:s in the literature. 
We therefore prove the following proposition, which covers all the 
parameter sets in Cheng's construction.

\begin{proposition}\label{prop_PYD_param}
	Let $s_i$ be the $i$:th odd number, and $t_i$ be the $i$:th
	even triangular number. Then for $i\geq2$, $v=s_i^2$ symbols and  
	$r= t_i$ is a set of admissible parameters 	for an $r\times r$  PYD.
\end{proposition}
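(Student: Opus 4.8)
The plan is to check, directly from the explicit parametrization, the two divisibility conditions that the paper records as defining admissibility of a $PYD(v:r\times r)$: that $e=r^{2}/v$ and $\lambda=2e(r-1)/(v-1)$ are both integers.

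First I would put the statement into a convenient arithmetic form. Write $s=s_i$, an odd integer $\geq 3$. Since $s$ is odd, exactly one of the two consecutive even numbers $s-1$, $s+1$ is divisible by $4$; call it $s^{*}$ and the other $s^{\circ}$, so that $4\mid s^{*}$, $s^{\circ}\equiv 2\pmod 4$, and $v-1=(s-1)(s+1)=s^{*}s^{\circ}$. Unwinding the definitions, the even triangular number $t_i$ attached to $s$ is exactly $r=ss^{*}/2$: it equals $T_s=s(s+1)/2$ when $s\equiv 3\pmod 4$ and $T_{s-1}=s(s-1)/2$ when $s\equiv 1\pmod 4$, and it is even because $s^{*}/2$ is. Letting $s$ run over $3,5,7,\dots$ produces the positive even triangular numbers $6,10,28,36,\dots$ in increasing order, which is what makes the indexing in the statement match up; the restriction $i\geq 2$ just discards the vacuous $s_1=1$.

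With this in hand the first condition is immediate: $e=r^{2}/v=(ss^{*}/2)^{2}/s^{2}=(s^{*}/2)^{2}$, which is an integer — indeed a perfect square — because $s^{*}$ is even. For the second, a one-line factorization of $ss^{*}-2$ gives $ss^{*}-2=s^{\circ}u$ with $u=s+2$ or $u=s-2$ according to the residue of $s$ mod $4$, whence
\[
\lambda=\frac{2e(r-1)}{v-1}=\frac{2(s^{*}/2)^{2}(ss^{*}/2-1)}{s^{*}s^{\circ}}=\frac{(s^{*})^{2}(ss^{*}-2)}{4\,s^{*}s^{\circ}}=\frac{s^{*}u}{4},
\]
an integer precisely because $4\mid s^{*}$. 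That is the whole content: choosing an \emph{even} triangular number is exactly what forces the extra factor of $4$ into the numerator.

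The computation is short, so I do not expect a genuine obstacle; the only step needing care rather than mechanical manipulation is the bookkeeping that identifies $t_i$ with $ss^{*}/2$ and keeps the two cases $s\equiv 1$ and $s\equiv 3\pmod 4$ straight throughout. I would close with the observation that, although these parameter sets are admissible, for $s\equiv 1\pmod 4$ one has $2r<v$, so Fisher's inequality is violated and no PYD actually exists on them — consistent with the proposition asserting admissibility only, and with Cheng's construction realizing (prime-power) parameters in the complementary range $s\equiv 3\pmod 4$.
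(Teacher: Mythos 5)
Your proof is correct and follows essentially the same route as the paper's: both verify admissibility by computing $e=r^2/v$ and $\lambda=2e(r-1)/(v-1)$ explicitly and checking integrality via a case split on $s\bmod 4$ (the paper phrases this as the parity of $i$, writing $t_i=(2i-1)\bigl(2i-1-(-1)^{i-1}\bigr)/2$ and simplifying $\lambda$ to $i(2i+1)/2$ or $(i-1)(2i-3)/2$). Your packaging via the factor $s^*$ divisible by $4$ is a tidier way to organize the same computation, and your closing Fisher-inequality remark is a correct observation that goes beyond what the proposition (which concerns divisibility conditions only) asserts.
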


\begin{proof}
	Let $e$ be the replication number in the row-column designs (to be 
	calculated), and consequently $2e$ be the replication number in the 
	BIBD corresponding to these PYD parameters.
	
	The standard divisibility conditions for the BIBD give that we must 
	satisfy $2ve=br$, where $v=s_i^2$ is the number of symbols, 
	$b=2t_i$ is the number of blocks in the BIBD, and $r=t_i$ is the size 
	of the blocks. Additionally, for the pair replication number $\lambda$ 
	in the BIBD we must have $\lambda(v-1) = 2e(r-1)$.
	
	The $i$:th odd square can be written as $s_i^2 = (2i-1)^2$, and the 
	$i$:th even triangular number can be written as 
	$(2i-1)(2i-1-(-1)^{(i-1)})/2$. The replication number in the 
	row-column design will then be 
	$e = t_i^2/s_i^2 = (2i-1-(-1)^{(i-1)})^2/4$, 
	which is clearly integer since the numerator is a square of an even 
	number and non-zero if $i\geq2$, and the corresponding replication 
	number of the BIBD is $2e = (2i-1-(-1)^{(i-1)})^2/2$, which is of 
	course also integer and non-zero.
	
	The pair replication number in the BIBD will then be 
	\[
		\lambda 
		= \frac{2e(r-1)}{v-1} 
		= \frac{(2i-1-(-1)^{(i-1)})^2(((2i-1)(2i-1-(-1)^{(i-1)}))/2-1)}{2((2i-1)^2-1)}.
	\]
	For convenience, we treat odd and even $i$ separately. In the even case,
	by some basic algebra, the expression for $\lambda$ simplifies to 
	$i(2i+1)/2$, which
	is clearly integer. In the odd case, we instead get $(i-1)(2i-3)/2$,
	which again is integer and non-zero for $i\geq2$.
	
	Since all divisibility criteria for BIBD are thus met, we conclude
	that the parameters are admissible.	 
\end{proof}

We note that the replication numbers of the row-column design parameters
are consecutive squares of even numbers, but they only go up for every 
second increase of $i$. We also ran a brute-force search for other 
admissible PYD parameters, and up to $v=367^2$ there are no such parameters
for anything other than $v$ being an odd square. However, for some values 
of $i$, the first being $i=17$, we found other possible dimensions for 
the PYD. For $i=17$, the main series of parameters given in 
Proposition~\ref{prop_PYD_param} predicts $PYD(289:136 \times 136)$,
but additionally, $PYD(289:204 \times 204)$ is also admissible. For $i = 99$
we found the first instance of three admissible parameter sets, namely
$PYD(99: 3465 \times 3465)$, $PYD(99: 4950 \times 4950)$, and
$PYD(99: 6930 \times 6930)$, where $r=4950$ is what 
Proposition~\ref{prop_PYD_param} predicts.

Among the parameter sets admissible for PYD:s, some are admissible for 
AO-arrays, but as we now prove, none of the other types of row-column 
designs we treat here can be square in the non-trivial range for $v$. 
Note that for $r=v$, we get a Latin square, which trivially satisfies 
all the intersection conditions.

\begin{proposition}\label{prop:square}
	Suppose $r=c$, $r < v < r^2$ and the parameters $(v,r,r)$ are 
	admissible for a row-column design $A$. Then $A$ is a proper 
	AO-array.
\end{proposition}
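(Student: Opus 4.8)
**The plan is to show that for a square row-column design on $v$ symbols with $r < v < r^2$, the relevant divisibility forces the column and row intersection numbers to be non-integers, so properties CC and RR must fail, while RC can still hold.**

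First I would recall the admissibility machinery from Section~\ref{sec_average}: in an equireplicate binary $r\times c$ array, $\overline{\lambda_{cc}}=\frac{r(e-1)}{c-1}$ and $\overline{\lambda_{rr}}=\frac{c(e-1)}{r-1}$, and property CC (resp.\ RR) can hold only if these averages are integers. Setting $c=r$, both expressions become $\frac{r(e-1)}{r-1}$, where $e=\frac{r^2}{v}$. So the whole proof reduces to showing that $\frac{r(e-1)}{r-1}$ is \emph{not} an integer when $r<v<r^2$, i.e.\ when $1<e<r$. The key step is the arithmetic fact that $\gcd(r,r-1)=1$, so $\frac{r(e-1)}{r-1}\in\mathbb{Z}$ forces $(r-1)\mid (e-1)$; but $1<e<r$ gives $0<e-1<r-1$, so $(r-1)\nmid(e-1)$, a contradiction. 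Hence $\overline{\lambda_{cc}}$ and $\overline{\lambda_{rr}}$ are both non-integral, so neither CC nor RR can hold for $A$.

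Then I would close the argument by noting what is left: since $A$ is by hypothesis admissible and is a row-column design, and since CC and RR are both ruled out, $A$ cannot be a triple, (transposed) double, (transposed) sesqui, or (transposed) mono array; the only remaining type in the taxonomy of Section~\ref{sec_not} that imposes no constant intersection condition on rows or columns is the AO-array. Moreover, since $A$ satisfies neither RR nor CC, it is automatically a \emph{proper} AO-array (the properness requirement is exactly that RR and CC fail). One should also check that RC (adjusted orthogonality) is not itself obstructed here: by Proposition~\ref{double_average}, $\overline{\lambda_{rc}}=e$ is always an integer, so there is no divisibility obstruction to RC, consistent with the claim that $A$ is an AO-array; strictly, admissibility for AO-arrays only needs $ve=r^2$ with $r<v<r^2$, which holds.

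**The main obstacle is essentially bookkeeping rather than depth:** one has to be careful that ``$A$ is admissible for a row-column design'' in the statement is interpreted correctly — the intended reading is that $A$ is a row-column design whose parameters happen to be admissible, and the conclusion is about which named sub-type $A$ must fall into. The genuine content is the one-line gcd argument; the surrounding work is just quoting the definitions and the averaging formulas already established. I would keep the write-up to three or four sentences and cite Proposition~\ref{prop_full}'s underlying double counts (i.e.\ the formulas for $\overline{\lambda_{rr}},\overline{\lambda_{cc}}$) rather than reproving them.
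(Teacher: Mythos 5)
Your proof is correct and takes essentially the same route as the paper: both reduce to showing $\frac{r(e-1)}{r-1}$ cannot be an integer, via $\gcd(r,r-1)=1$ forcing $(r-1)\mid(e-1)$. The only cosmetic difference is that you derive the contradiction directly from $1<e<r$ (so $0<e-1<r-1$), whereas the paper writes $e-1=a(r-1)$ with $a\geq 1$ and solves for $a$ to obtain $r\geq v$; your version is marginally more direct but the content is identical.
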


\begin{proof}
	As observed in Section~\ref{sec_average}, if a parameter set is
	admissible for some row-column design, then it is admissible for 
	an AO-array, so $\lambda_{rc}$ is integer.
	
	If we additionally want, say, $\overline{\lambda_{rr}}=\frac{r(e-1)}{r-1}$ 
	to be integer, $r-1$ clearly has to divide $e-1$. If $e=1$, we get an array
	where each symbol appears only once, which trivially is a triple array, but 
	since $v<r^2$, it holds that $e>1$, and it follows that for some
	integer $a\geq1$, 
	$e-1 = a(r-1)$. Substituting $e=\frac{r^2}{v}$ and solving for $a$,
	it follows that $1 \leq a = (r^2-v)/(vr-v)$, so $r\geq v$, a contradiction
	to the assumption that $r < v$.
	
	Since $\overline{\lambda_{cc}}=\overline{\lambda_{rr}}=\frac{r(e-1)}{r-1}$
	in a square design, the same argument holds if we instead assume that
	$\overline{\lambda_{cc}}$ is integer.
	It follows that $A$ is a proper AO-array.
\end{proof}

For square AO-arrays, we have fully enumerated $4 \times 4$ AO-arrays on $8$ 
symbols and $6 \times 6$ AO-arrays on $9$ symbols. By 
Proposition~\ref{prop_AOprod}, AO-arrays exist for any admissible parameter set,
and we additionally constructed $12 \times 12$ AO-arrays on $16, 18$ and $24$ 
symbols, respectively, using Construction~\ref{constr_halflatinAO}.
In our heuristic search, we have also found additional examples of AO-arrays 
of sizes $6 \times 6$ on 12 or 18 symbols, $8 \times 8$ on 16 symbols, 
and $10 \times 10$ on 20 symbols (see Table~\ref{tab_ao}). 

Most of these parameters sets are ruled out for PYD:s, since the 
corresponding BIBD does not exist. In particular, an $r \times r$ 
AO-array on $v$ symbols and replication number $e$ would 
give rise to a $(v,2r,2e,r,\frac{2e(r-1)}{v-1})$-BIBD, or 
$(v,r,\frac{2e(r-1)}{v-1})$-BIBD for short. Among our examples, this only 
leaves $6 \times 6$ AO-arrays on $9$ symbols as candidates for being PYD:s.

McSorley and Philips~\cite{PYD6x6} enumerated and analyzed in great 
detail all $6 \times 6$ PYD:s on $9$ symbols. They found $696$ non-isotopic
PYD:s, which reduced to $348$ species when allowing transposes, since
no PYD was found to be isotopic to its own transpose.

Among our data, $157$ of the $\num{53125}$ non-isotopic $6 \times 6$ AO-arrays on $9$ 
symbols were found to also satisfy the PYD condition, one of which is 
given in Figure~\ref{fig_AO_PYD}, and we found that this reduces to $153$
AO-arrays when considering transposes to be equivalent. We note that this shows
that not all PYD:s are AO-arrays, and that not all AO-arrays on suitable 
parameters are PYD:s. Construction~\ref{constr_productAO} does not in general
yield PYD:s. In~\cite{PYD6x6},
several further properties of $6 \times 6$ PYD:s are investigated, but 
such further investigations of the AO-arrays are beyond the scope of 
the present paper.

\begin{figure}\tabcolsep=4.5pt
\begin{center}\begin{tabular}{|cccccc|}
	\hline
	0 &1 &2 &3 &4 &5 \\
	1 &2 &0 &4 &6 &7 \\
	2 &3 &8 &6 &5 &0 \\
	3 &4 &7 &5 &8 &1 \\
	7 &8 &5 &2 &1 &6 \\
	8 &6 &4 &7 &0 &3 \\
	\hline
	\end{tabular}
\end{center}
	\caption{A $6 \times 6$ AO-array on $9$ symbols that is also a PYD.}
	\label{fig_AO_PYD}
\end{figure}

%----------------------------------------------------------------------
%----------------------------------------------------------------------
\section{Open questions}
\label{sec_concl}
%----------------------------------------------------------------------
%----------------------------------------------------------------------

It can be observed in Table~\ref{tab_master}
that there are only a few admissible parameter sets for which the 
corresponding design does not exist. One such set, which has already 
been discussed in the literature is $(6,3,4)$, for which there exists
no triple array.
The other parameter set in Table~\ref{tab_master} is $(10,5,6)$, 
where there are no proper transposed sesqui arrays, and $(10,6,5)$ 
where there are no proper sesqui arrays. We have also observed above
in Observations~\ref{obs_15610} and~\ref{obs_15910} that there are no 
proper transposed sesqui arrays on parameters $(15,6,10)$ and 
$(15,9,10)$. In all these cases, the relevant component designs
exist.
This leads us to ask the following question.

\begin{question}
	Is there some `simple' necessary condition for existence 
	that rules out proper transposed sesqui arrays on 
	parameters $(10,5,6)$, $(15,6,10)$ or $(15,9,10)$? 
\end{question}

We would also like to restate Question~\ref{quest_small_v}.

\begin{question}
	Are there admissible parameter sets for double/triple arrays that satisfy 
	$v < r+c-1$?
\end{question}

%----------------------------------------------------------------------
%----------------------------------------------------------------------
\section*{Acknowledgments}
%----------------------------------------------------------------------
%----------------------------------------------------------------------

We would like to thank Alexey Gordeev for independently verifying the 
computational results in Tables~\ref{tab_tada_iso} 
through~\ref{tab_AO_iso} in Appendix~\ref{app_auto}.

The computational work was performed on resources provided by the Swedish 
National Infrastructure for Computing (SNIC) at 
High Performance Computing Center North (HPC2N). 
This work was supported by the Swedish strategic research programme eSSENCE.    
This work was supported by The Swedish Research Council grant \num{2014}-\num{4897}. 

%----------------------------------------------------------------------
%----------------------------------------------------------------------

\bibliographystyle{plain}

%----------------------------------------------------------------------
%----------------------------------------------------------------------

\appendix
\newpage
\section{Autotopism group orders}\label{app_auto}

\begin{table}[H]
	\begin{center}
		\begin{tabular}{|c|r||r|r|r||r|r|}
			%\hline
			\multicolumn{2}{r||}{} & \multicolumn{3}{c||}{Double arrays} 
				& \multicolumn{2}{c}{Triple arrays} \\
				\hline
			\multicolumn{2}{|r||}{$v$} 
				& $6$ & $10$ & $12$ & $10$ & $12$ \\ 
				\hline 
			\multicolumn{2}{|r||}{$r \times c$} 
				& $3 \times 4$	& $5 \times 6$ & $4 \times 9$ 
				& $5 \times 6$ & $4 \times 9$ \\ 
				\hline
			\multicolumn{2}{|r||}{Total \#} 
				& 2 & \num{24663} & 2893 & 7 & 1 \\ 
			\hline\hline
			$|\mbox{Aut}|$ 
			& 1 &   & \num{24146} & 2867 &   &   \\ 
			\cline{2-7}
			& 2 & 1 & 398   &      &   &   \\ 
			\cline{2-7}
			& 3 & 1 & 89    & 24   & 2 & 1 \\ 
			\cline{2-7}
			& 4 &   & 13    &      & 1 &   \\ 
			\cline{2-7}
			& 5 &   & 5     &      &   &   \\ 
			\cline{2-7}
			& 6 &   & 8     &      & 1 &   \\ 
			\cline{2-7}
			& 9 &   &       & 2    &   &   \\ 
			\cline{2-7}
			& 10 &  & 1     &      &   &   \\ 
			\cline{2-7}
			& 12 &  & 3     &      & 2 &   \\ 
			\cline{2-7}
			& 60 &  &       &      & 1 &   \\ 
			\hline 
		\end{tabular} 
	\end{center}
	\caption{The number of proper double arrays and triple arrays sorted 
		by autotopism group order.}
	\label{tab_tada_iso}
\end{table}

\begin{table}[H]
	\begin{center}
		\begin{tabular}{|c|r||r|r|r|r|r|r|r|r|}
			\hline
			\multicolumn{2}{|r||}{$v$} & $6$ & $8$ & $9$ 
				& \multicolumn{3}{c|}{$10$} & \multicolumn{2}{c|}{$12$} \\ 
				\hline 
			\multicolumn{2}{|r||}{$r \times c$} 
				& $4 \times 3$	& $6 \times 4$ & $6 \times 3$ 
				& $4 \times 5$ & $5 \times 6$ & $6 \times 5$ 
				& $6 \times 4$ & $8 \times 3$ \\ 
				\hline
			\multicolumn{2}{|r||}{Total \#} 
				& 3	& \num{12336} & 104 & 189 & \num{362120} & \num{8364560} & \num{29695} & 4367 \\ 
			\hline\hline
			$|\mbox{Aut}|$ 
			& 1 &   & \num{11643} & 65  & 140 & \num{360485} & \num{8357136} & \num{28007} & 3970 \\ 
			\cline{2-10}
			& 2 & 2 & 598   & 31  & 40  & 1610   & 6890    & 1492  & 338  \\ 
			\cline{2-10}
			& 3 &   & 19    & 2   &     & 14     & 423     & 28    & 9    \\ 
			\cline{2-10}
			& 4 & 1 & 58    & 1   & 7   &        & 86      & 125   & 29   \\ 
			\cline{2-10}
			& 5 &   &       &     &     & 5      & 4       &       &      \\ 
			\cline{2-10}
			& 6 &   & 7     & 5   &     & 4      & 18      & 19    & 14   \\ 
			\cline{2-10}
			& 8 &   & 10    &     & 2   &        &         & 18    & 6    \\ 
			\cline{2-10}
			& 10 &   &      &     &     & 2      & 3       &       &      \\ 
			\cline{2-10}
			& 12 &   & 1    &     &     &        &         & 3     &      \\ 
			\cline{2-10}
			& 16 &   &      &     &     &        &         & 3     & 1    \\ 
			\cline{2-10}
			\hline 
		\end{tabular} 
	\end{center}
	\caption{The number of proper mono arrays sorted by autotopism group order.}
	\label{tab_mono_iso}
\end{table}

%Autotopism group sizes MA(15,5,6):  
%1: 1243774
%2: 1449
%3: 104
%5: 13
%6: 18

\begin{table}\small
	\begin{center}
		\begin{tabular}{|c|r||r|r|r|r|r|r|r|r|r|r|r|}
			\hline
			\multicolumn{2}{|r||}{$v$} & $6$ & $8$ & $9$ 
				& \multicolumn{3}{c|}{$10$} & \multicolumn{3}{c|}{$12$} 
				& $14$ & $15$ \\ 
				\hline 
			\multicolumn{2}{|r||}{$r \times c$} 
				& $4 \times 3$ & $6 \times 4$ & $6 \times 3$ 
				& $4 \times 5$ & $6 \times 5$ & $8 \times 5$
				& $6 \times 4$ & $8 \times 3$ & $9 \times 4$ &
				$6 \times 7$ & $5 \times 6$ \\ 
				\hline
			\multicolumn{2}{|r||}{Total \#} 
				& 2	& 113 & 5  & 1 & 49 & \num{1549129} & 20 & 15 & \num{249625} & \num{44602} & $3$ \\ 
			\hline\hline
			$|\mbox{Aut}|$ 
			& 1 &   & 40  &    &   & 31 & \num{1537034} &   & 1 & \num{243241} & \num{40617} & \\ 
			\cline{2-13}
			& 2 &   & 35  & 1  &   & 9  & \num{11617} & 3  & 4 & 5660 & 3887 & 1 \\ 
			\cline{2-13}
			& 3 &   & 1   &    &   & 3  & 148 &   &   & 86 & 41  & \\ 
			\cline{2-13}
			& 4 & 1 & 17  & 1  &   & 1  & 214 & 4 &   & 484 &  &  \\ 
			\cline{2-13}
			& 5 &   &     &    &   &    & 4 &   &   &  &  & \\ 
			\cline{2-13}
			& 6 &   & 1   & 1  &   & 4  & 84 & 2  & 2 & 52 & 55 & 2 \\ 
			\cline{2-13}
			& 8 &   & 11  &    &   &    & 23 & 6  & 4 & 63 &  & \\ 
			\cline{2-13}
			& 9 &   &    &    &   &    &   &   &   & 1 &  & \\ 
			\cline{2-13}
			& 12 & 1 &    &    &   &    &   &   &   & 15 &  & \\ 
			\cline{2-13}
			& 16 &   & 4  &    &   &    &   & 2 & 1 & 9 &  & \\ 
			\cline{2-13}
			& 18 &   &    & 1  &   &    &   &   & 1 & 2 & & \\ 
			\cline{2-13}
			& 20 &   &    &    & 1 & 1  &   &   &   &  & & \\ 
			\cline{2-13}
			& 24 &   & 2  &    &   &    & 4 & 2 & 1  & 9 & & \\ 
			\cline{2-13}
			& 36 &   &    & 1  &   &    &   &   &    &  & & \\ 
			\cline{2-13}
			& 40 &   &    &    &   &    & 1 &   &    &  & & \\ 
			\cline{2-13}
			& 42 &   &    &    &   &    &   &   &    &  & 2 & \\ 
			\cline{2-13}
			& 48 &   & 2  &    &   &    &   &   &    & 2 & & \\ 
			\cline{2-13}
			& 96 &   &    &    &   &    &   & 1 &    &  & & \\ 
			\cline{2-13}
			& 144 &   &    &   &   &    &   &   & 1  & 1 & & \\
			\hline 
		\end{tabular} 
	\end{center}
	\caption{The number of transposed proper sesqui arrays sorted by 
		autotopism group order.}
	\label{tab_sesqui_iso}
\end{table}

\begin{table}
	\begin{center}
		\begin{tabular}{|c|r||r|r|r|r|r|r||r|r|}
			\multicolumn{2}{r||}{} & \multicolumn{6}{c||}{Autotopism} 
				& \multicolumn{2}{c}{Autotrisotopism} \\
			\hline
			\multicolumn{2}{|r||}{$v$} 
				& $8$ & $9$ & \multicolumn{2}{c|}{$10$} & $12$ & $14$ & 8 & 9 \\ 
				\hline 
			\multicolumn{2}{|r||}{$r \times c$} 
				& $4 \times 4$ & $6 \times 6$ & $4 \times 5$ & $5 \times 6$ 
				& $4 \times 6$ & $4 \times 7$ & $4 \times 4$ & $6 \times 6$\\ 
				\hline
			\multicolumn{2}{|r||}{Total \#} 
			   & 20 & \num{53215} & 45 & 8707 & 312 & 1632 & 12 & \num{26632} \\ 
			\hline\hline
			$|\mbox{Aut}|$ 
			& 1 &   & \num{49280} & 3  & 7534 & 38  & 641 & & \num{24634} \\ 
			\cline{2-10}
			& 2 & 1 & 3488  & 15 & 1042  & 105 & 593 & 1 & 1746 \\ 
			\cline{2-10}
			& 3 &   & 105    &   & 8    &     &     &  & 52 \\ 
			\cline{2-10}
			& 4 & 4 & 238    & 16 & 113   & 76  & 232 & 4 & 133 \\ 
			\cline{2-10}
			& 5 &   &       &   & 1    &     &     & &  \\ 
			\cline{2-10}
			& 6 &   & 70    &   & 2    & 2   & 9   & & 36 \\ 
			\cline{2-10}
			& 8 & 7 &       & 4 &      & 40  & 60  & 3 & 12 \\ 
			\cline{2-10}
			& 9 &  & 2      &   &      &     &     & & 1 \\ 
			\cline{2-10}
			& 10 &   &      &   & 2    &     & 1   & &  \\ 
			\cline{2-10}
			& 12 &   & 18    & 3 & 4    & 12  & 36  & & 8 \\ 
			\cline{2-10}
			& 14 &   &      &   &      &     & 1   & &  \\ 
			\cline{2-10}
			& 16 & 5 &      &   &      & 15  & 13  & 2 &  \\ 
			\cline{2-10}
			& 18 &  & 6     &   &      &   &   &  &  3 \\ 
			\cline{2-10}
			& 20 &   &      & 1 & 1    &     & 5   & &  \\ 
			\cline{2-10}
			& 24 &   &      & 3 &      & 5   & 18  & & 2 \\ 
			\cline{2-10}
			& 28 &   &      &   &      &     & 1   & &  \\ 
			\cline{2-10}
			& 32 & 2 &      &   &      & 8   & 3   & 1 & \\ 
			\cline{2-10}
			& 36 &   & 8     &   &      & 1   &     & & 3 \\ 
			\cline{2-10}
			& 40 &   &      &   &      &     & 4   & &  \\ 
			\cline{2-10}
			& 48 &   &      &   &      & 2   & 12  & &  \\ 
			\cline{2-10}
			& 64 & 1 &      &   &      & 3   &     & 1 &  \\ 
			\cline{2-10}
			& 72 &   &      &   &      & 4   &     & & 2 \\ 
			\cline{2-10}
			& 96 &   &      &   &      &     & 3   & &  \\ 
			\cline{2-10}
			& 384&   &      &   &      & 1   &     & &  \\ 
			\cline{2-10}
			\hline 
		\end{tabular} 
	\end{center}
	\caption{The number of proper AO-arrays sorted by autotopism group order,
		or autotrisotopism group order, respectively.}
	\label{tab_AO_iso}
\end{table}

\end{document}